\newtheorem{theorem}{Theorem}[section]
\newtheorem{definition}[theorem]{Definition}
\newtheorem{lemma}[theorem]{Lemma}
\newtheorem{remark}[theorem]{Remark}
\newtheorem{example}[theorem]{Example}
\newenvironment{proof}[1][Proof]{\textbf{#1.} }{\hfill\rule{0.5em}{0.5em}}
{\catcode`\@=11\global\let\AddToReset=\@addtoreset
\AddToReset{equation}{section}

\AddToReset{theorem}{section}

%\title{New approaches on Ishikawa-type iterative processes \\ via optimal error bounds}
\title{A new approach to convergence analysis of iterative models with optimal error bounds}

\author{Minh-Phuong Tran\footnote{Corresponding author.} \thanks{Applied Analysis Research Group, Faculty of Mathematics and Statistics, Ton Duc Thang University, Ho Chi Minh City, Vietnam; \texttt{tranminhphuong@tdtu.edu.vn}}, Thanh-Nhan Nguyen\thanks{Group of Analysis and Applied Mathematics,  Department of Mathematics, Ho Chi Minh City University of Education, Ho Chi Minh city, Vietnam; \texttt{nhannt@hcmue.edu.vn}}, Thai-Hung Nguyen, Tan-Phuc Nguyen, \\ Tien-Khai Nguyen, Cong-Duy-Nguyen Nguyen, Trung-Hieu Huynh\thanks{Department of Mathematics, Ho Chi Minh City University of Education, Ho Chi Minh city, Vietnam}}

%Emails: \texttt{thaihungspt2003@gmail.com, tanphucb11@gmail.com, tienkhai2000@gmail.com, nguyennguyen04012004@gmail.com, huynhhieu2004@gmail.com}

\date{\today} 

\begin{document}
\maketitle

\begin{abstract}

In this paper, we study a new approach related to the convergence analysis of Ishikawa-type iterative models to a common fixed point of two non-expansive mappings in Banach spaces. The main novelty of our contribution lies in the so-called \emph{optimal error bounds}, which established some necessary and sufficient conditions for convergence and derived both the error estimates and bounds on the convergence rates for iterative schemes. Although a special interest here is devoted to the Ishikawa and modified Ishikawa iterative sequences, the theory of \emph{optimal error bounds} proposed in this paper can also be favorably applied to various types of iterative models to approximate common fixed points of non-expansive mappings.

\medskip

\medskip

\noindent {\emph{Keywords:} Non-expansive mappings;  Iterative processes; Optimal error bounds; Fixed point theory; Banach space; Convergence rate.} 

\medskip

\noindent{\emph{Mathematics Subject Classification.} 47H10; 47H09; 54H25; 65J05; 41A25.} 
\end{abstract} 

\tableofcontents

\section{Introduction and motivation}

A number of nonlinear equations of the form $F(x)=0$ are naturally formulated as fixed point problems. The fixed-point formulation of a nonlinear equation can be considered as
\begin{align}
\label{eq:fixed}
x=T(x),
\end{align}
where $T$ is the self-map of a set $X \subset \mathbb{R}$ that is endowed with a certain structure. These two equations are equivalent via $F(x)=x-T(x)$. In many cases, $T$ is assumed to be contractive or non-expansive mapping to ensure the existence of fixed points (see Browder~\cite{Browder} and Kirk~\cite{Kirk} for further reading). To the best of our knowledge, fixed point problems have a wide range of applications in different fields inside and beyond mathematics as approximation theory, optimization theory, control theory, finance, economics, medical sciences, engineering, machine learning, etc. For years, several iterative methods have been generated to approximate the fixed points of contractive and/or non-expansive mappings and this line of research has been developed by the works of many authors.

Let us briefly revisit some known iterative methods to approximate a fixed point of an operator. The simplest method to solve \eqref{eq:fixed} is the well-known Picard's iteration, which is commonly used, to find the sequence of successive approximations defined by an initial value $x_0 \in X$ and
\begin{align}\label{Picard}
x_{n+1} = T(x_n), \quad n \in \mathbb{N},
\end{align}
and in order to ensure the convergence, some strong contractive properties of the mapping $T$ must be included. It can be seen that Picard's iteration method is only convergent for an appropriate class of mappings and not convergent for other classes of mappings. In such cases, a classical iteration method was introduced by Mann in~\cite{Mann} to approximate the fixed points. It is a one-step iterative process defined as follows
\begin{align*}
x_{n+1} = (1-a_n)x_n + a_nT(x_n), \quad n \in \mathbb{N},
\end{align*}
where the initial guess $x_0 \in X$, the sequence $(a_n)_{n \in \mathbb{N}} \subset [0,1]$ such that $\displaystyle{\lim_{n \to \infty}{a_n}}=0$ and $\sum{a_n}=\infty$. It is worth mentioning that when $(a_n)$ is a constant sequence, the Mann iteration scheme reduces to the so-called Krasnoselskij iteration scheme, while $a_n \equiv 1$ for all $n \in \mathbb{N}$, we are back to Picard procedure. There is a large body of literature concerning the convergence of Mann iteration for different classes of nonlinear mappings and in various function spaces. However, it is known that the Mann iteration converges weakly to the fixed point of non-expansive mappings and consequently, several iterative schemes have been proposed to improve and obtain stronger convergence results (cf. \cite{D70, Halpern, Ishikawa, KX2005}). For instance, Ishikawa in \cite{Ishikawa} devised another iteration process 
\begin{align*}
x_{n+1} = (1-b_n)x_n + b_nT\big((1-a_n)x_n + a_nT(x_n)\big), \quad n \in \mathbb{N},
\end{align*}
or equivalently written via a two-step scheme
\begin{align}\label{Ishikawa-2}
\begin{cases} y_n &= (1-a_n)x_n + a_nT(x_n), \\ x_{n+1} &= (1-b_n)x_n + b_nT(y_n), \quad n \in \mathbb{N},
\end{cases}
\end{align}
with an initial guess $x_0 \in X$, $(a_n)_{n \in \mathbb{N}}$ and $(b_n)_{n \in \mathbb{N}}$ are two sequences of real numbers in $[0,1]$. It is clear from~\eqref{Ishikawa-2} that Mann iteration is a specific case when $a_n\equiv 0$ for all $n \in \mathbb{N}$. So far, it has been observed that Picard, Mann, and Ishikawa are three of the various authors whose contributions are of great value in the study of fixed-point iterative procedures. These methods as well as their modified iterative schemes are standard, and they have been studied and improved through the years. Besides, there has been considerable interest in iterative schemes of their types for finding  common fixed points for a pair of non-expansive mappings $(T_1,T_2)$. It was for example shown in~\cite{TT1998, YC2007} an iteration process defined as
\begin{align}\label{Ishi-T12}
\begin{cases}
x_0 \in X,\\
x_{n+1} = (1-b_n)x_n + b_nT_2\big((1-a_n)x_n + a_nT_1(x_n)\big), \quad n \in \mathbb{N},
\end{cases}
\end{align}
and we refer the reader to the references given there for interesting recent developments on iterative methods and convergence results to the common fixed-point. 

From the experimental point of view, it is known that the \emph{error} and \emph{rate of convergence} play an important role in checking the efficiency of iterative schemes. In the past years, the convergence rate, stability, and efficiency of different iterative methods have been discussed and improved by many authors (see for instance~\cite{Noor, KX2005, Berinde2008, CCK2003, SP2011, Popescu2007, Xue}). Since then, the construction of iterative methods for the fixed points of contractive/non-expansive and more general classes of mappings has been extensively investigated to obtain higher-order convergence rates, small errors, and higher efficiency coefficients. 

The main intent of this paper is to introduce a new standpoint to conclude the convergence of a multi-step fixed-point iteration method to a common fixed point of two given contractive/non-expansive mappings $T_1$ and $T_2$. In particular, we here define the so-called \emph{optimal error bounds} (OEBs) (see Definition~\ref{def:upper} and~\ref{def:lower}), and based on the analysis of optimal error bounds, it allows us to develop necessary and sufficient conditions for establishing the convergence of an iteration process. Let us be a bit more precise and explain how this concept comes into the convergence analysis. Let us consider a sequence $\{x_n\}_{n \in \mathbb{N}}$ generated by an iterative procedure involving two certain mappings $T_1$ and $T_2$ that converges to a common fixed point $x^*$ of $T_1, T_2$. As usual, estimating errors and convergence rate of an iterative sequence can be characterized from the following bound
\begin{align*}
\frac{\|x_n-x^*\|}{\|x_0-x^*\|} \le \texttt{Error bound}, \quad n \ge 1.
\end{align*}
However, the case of the \emph{optimal} error bound is more delicate and has been less investigated. To develop the idea with error bounds, we establish the theory of so-called \emph{optimal error bounds}, where the error estimate and the convergence rate will be derived of the following type:
\begin{align}
\label{eq:oeb}
\texttt{OLEB} \le  \frac{\|x_n-x^*\|}{\|x_0-x^*\|} \le \texttt{OUEB}, \quad n \ge 1,
\end{align}
where $x_0$ is an initial approximation. Here, the \emph{optimal lower error bounds} (OLEBs) and \emph{optimal upper error bounds} (OUEBs) are understood to be optimal in the sense that one cannot expect better error bounds without additional assumptions on two mappings $T_1$ and $T_2$ even in the general case. It should be said that from~\eqref{eq:oeb}, we devote special attention to the question of clarifying the sharp OEBs not only to obtain the convergence rate of an appropriate iterative scheme but also the comparison among various iterations. The estimation of the optimal error bounds is very important to obtain two issues: convergence and rate of convergence of the fixed-point iteration models. As such, the main novelty in this paper lies in the method of optimal error estimation and our contribution to the study of OEBs will improve and generalize many of the results in the literature. Moreover, it paves the way for the convergence analysis of an iterative algorithm for finding a common fixed point of two mappings. 

Building upon the approach from OEBs, we derive the necessary and sufficient conditions for the iterative schemes to converge to the common fixed point. In this study, the optimality is illustrated by the model example of Ishikawa-type iterative sequences and we analyze the convergence to a common fixed point of a pair of given mappings. We particularly formulate the convergence analysis of both the Ishikawa-type iteration and the its modified scheme (we refer to Section~\ref{sec:OEB_Ishikawa}). According to the new idea of OEBs, we are able to analyze the convergence of a fixed point iteration for contractive/non-expansive mappings in the framework of Banach spaces. 

The remainder of this paper is organized as follows. In Section~\ref{sec:pre}, we introduce some important definitions, some remarks, and preparatory lemmas which are required in the sequel. Section~\ref{sec:OEB_Ishikawa} motivates by treating two parts with the analysis of OEBs. The first one concerns the Ishikawa-type process and the second sets forth the modified Ishikawa-type process. In Section~\ref{sec:main}, we conduct a further analysis of the convergence of Ishikawa iterative methods with the idea of OEBs. More precisely, we focus our attention on convergence rate estimates and comparison results between the rate of convergences between models. Section~\ref{sec:tests} is devoted to some numerical tests, to show the efficiency of the proposed optimal error bounds. We therein illustrate the convergence rate and the comparative study on the convergence rate via several examples. In the last section, we draw some conclusions, discuss and point out possible directions for future research.

\section{Preliminaries and Definition of OEBs}
\label{sec:pre}

In this preparatory section, we provide some preliminaries,  discuss the idea of so-called \emph{optimal error bounds}, and some technical lemmas that will be used in the sequel. Throughout the paper, we always assume $\mathbb{X}$ is a Banach space in $\mathbb{R}$.

\begin{definition}[Non-expansive mapping]
\label{def-Ne}
For each non-empty subset $\Omega$ of $\mathbb{X}$, a mapping $T: \Omega \to \Omega$ is said that to be non-expansive if there exists $\alpha \in [0,1]$ such that
\begin{align}\label{ineq-def:Ne}
\|T(x) - T(y)\| \le \alpha \|x-y\|, \quad \text{for all} \ x,y \in \Omega.
\end{align}
In this case, we denote $T \in \mathbf{Ne}_{\alpha}(\Omega)$. And when $\alpha<1$, we say that $T$ is a $\alpha$-contraction and write $T \in \mathbf{Co}_{\alpha}(\Omega)$.
\end{definition}

\begin{definition}[Common fixed point of two mappings]
Let $T_1 \in \mathbf{Ne}_{\alpha_1}(\Omega)$, $T_2 \in \mathbf{Ne}_{\alpha_2}(\Omega)$ for some $\alpha_1, \alpha_2 \in [0,1]$. Then, $x^* \in \Omega$ is called a common fixed point of two mappings $T_1, T_2$ if 
$$T_1(x^*) = T_2(x^*) = x^*.$$ 
In this case, we write $x^* \in \mathcal{F}(T_1, T_2)$, the set of all common fixed points of $T_1$ and $T_2$.
\end{definition}

\begin{definition}[Comparison of convergence rates]
\label{def-conv-rate}
Let $(u_n)_{n \in \mathbb{N}^*}$ and $(x_n)_{n \in \mathbb{N}^*}$ be two sequences in the Banach space~$\mathbb{X}$. Assume that both sequences $(u_n)$ and $(x_n)$ converge to $p$ in $\mathbb{X}$ as $n$ tends to infinity. We say that $(u_n)$ converges to $p$ faster than $(x_n)$ if 
\begin{align}\label{faster}
\lim\limits_{n \to \infty} \mathcal{R}(u_n,x_n,p) = 0,
\end{align}
where $\mathcal{R}(u_n,x_n,p)$ denotes the ratio between $\|u_{n} - p\|$ and $\|x_{n} - p\|$, i.e.
\begin{align}\label{Ra}
\mathcal{R}(u,x,p) := \begin{cases} \displaystyle\frac{\|u - p\|}{\|x - p\|}, &\ \mbox{ if } x \neq p, \\ 1, &\ \mbox{ if } u \neq x = p, \\ 0, &\ \mbox{ if } u = x = p. \end{cases}
\end{align}
\end{definition}

At this stage, we shall give the main notion for our work: the optimal error bounds of an iterative process. First, let $\Omega$ be a closed convex subset in Banach space $\mathbb{X}$. We consider an iterative process $(\mathcal{P})$ for approximating the common fixed point $x^* \in \mathcal{F}(T_1, T_2)$ of two non-expansive mappings $T_1 \in \mathbf{Ne}_{\alpha_1}(\Omega)$ and $T_2 \in \mathbf{Ne}_{\alpha_2}(\Omega)$, for $\alpha_1, \alpha_2 \in [0,1]$ as follows:
\begin{align}\tag{$\mathcal{P}$}
\label{P}
\begin{cases} x_0 \in \Omega, \\ x_{n+1} = P_{T_1,T_2}(x_n), \quad n \in \mathbb{N},\end{cases}
\end{align}
where $P_{T_1,T_2}: \Omega \to \Omega$ and $P_{T_1,T_2}(x^*)=x^*$. For example, the iterative process~\eqref{Ishi-T12} reads the function $P_{T_1,T_2}$ of problem~\eqref{P} as follows
\begin{align*}
P_{T_1,T_2}(x) = (1-b_n)x + b_nT_2\big((1-a_n)x + a_nT_1(x)\big), \quad x \in \Omega,
\end{align*}

\begin{definition}[Optimal upper error bound (OUEB)] 
\label{def:upper}
A non-negative sequence $(\mathcal{U}_n)_{n\in \mathbb{N}}$ is said to be the upper error bound sequence of an iterative process $(\mathcal{P})$ if the following inequality
\begin{align}\label{upper-bound}
\|x_{n+1} - x^*\| \le \mathcal{U}_n \|x_0 - x^*\|, \quad \forall n \in \mathbb{N},
\end{align}
holds for any pair of non-expansive mappings $T_1 \in \mathbf{Ne}_{\alpha_1}(\Omega)$ and $T_2 \in \mathbf{Ne}_{\alpha_2}(\Omega)$ satisfying $x^* \in \mathcal{F}(T_1,T_2)$. Moreover, we say that the sequence $(\mathcal{U}_n)_{n\in \mathbb{N}}$ is optimal if there exist $T_1 \in \mathbf{Ne}_{\alpha_1}(\Omega)$ and $T_2 \in \mathbf{Ne}_{\alpha_2}(\Omega)$ such that the equality in~\eqref{upper-bound} holds, i.e.
\begin{align*}
\|x_{n+1} - x^*\|  = \mathcal{U}_n \|x_0 - x^*\|, \quad \text{ for every } n \in \mathbb{N}.
\end{align*} 
\end{definition}

We shall give an example to the reader how to determine an optimal upper error bound for the Picard iteration.
\begin{example}
Assume that $\alpha \in (0,1)$ and $(x_n)$ is the Picard iteration sequence defined as in~\eqref{Picard} with initial value $x_0 \in \Omega$. We consider $T \in  \mathbf{Co}_{\alpha}(\Omega)$ admits a fixed point $x^*$. Then, it is easy to verify that 
\begin{align*}
\|x_{n+1} - x^*\| \le \alpha^n \|x_0 - x^*\|, \quad \mbox{for every } n \in \mathbb{N}.
\end{align*}
Here, for each $x^* \in \Omega$, we further consider a mapping $T: \Omega \to \Omega$ as
\begin{align*}
T(x) = \alpha x + (1-\alpha)x^*, \quad x \in \Omega.
\end{align*}
Clearly, $T \in  \mathbf{Co}_{\alpha}(\Omega)$ and $T(x^*) = x^*$, and therefore one has
\begin{align*}
\|x_{n+1} - x^*\| = \alpha^n \|x_0 - x^*\|, \quad \mbox{for every } n \in \mathbb{N}.
\end{align*}
From this fact, we may conclude that the sequence $(\mathcal{U}_n)_{n\in \mathbb{N}}$ defined by
\begin{align}\label{U-Picard}
\mathcal{U}_n = \alpha^n, \quad n \in \mathbb{N},
\end{align}
is an optimal upper error bound for the Picard iteration~\eqref{Picard}. One can see that upper error bound \eqref{U-Picard} can not be improved without additional conditions of contraction $T$.
\end{example}

It is remarkable from Definition~\ref{def:upper} that an iteration converges whenever the sequence of upper error bounds tends to zero. Furthermore, an interesting point is that the optimal upper error bound depends only on the contraction constant $\alpha$ and is independent of $T$ and $\Omega$.

Also, we follow the same strategy for the definition of optimal lower error bound.

\begin{definition}[Optimal lower error bound (OLEB)]
\label{def:lower}
A non-negative sequence $(\mathcal{L}_n)_{n\in \mathbb{N}}$ is said to be a lower error bound of an iterative process $(\mathcal{P})$ if 
\begin{align}
\label{lower-bound}
\|x_{n+1} - x^*\|  \ge \mathcal{L}_n \|x_0 - x^*\|, \quad \forall n \in \mathbb{N},
\end{align}
for any pair of non-expansive mappings $T_1 \in \mathbf{Ne}_{\alpha_1}(\Omega)$, $T_2 \in \mathbf{Ne}_{\alpha_2}(\Omega)$ satisfying $x^* \in \mathcal{F}(T_1,T_2)$. Moreover, we say that  the sequence $(\mathcal{L}_n)_{n\in \mathbb{N}}$ is optimal if there exist $T_1 \in \mathbf{Ne}_{\alpha_1}(\Omega)$ and $T_2 \in \mathbf{Ne}_{\alpha_2}(\Omega)$ such that the equality of ~\eqref{lower-bound} holds. 
\end{definition}

It is natural to expect that the idea of OEBs in Definition~\ref{def:upper} and~\ref{def:lower} allows us to conclude the convergence of an iterative sequence, and further compare the rate of convergence among various iterative schemes. Therefore, one would like to better understand the OEBs to derive necessary and sufficient conditions for the convergence of an iteration process to the common fixed point of non-expansive mappings. Along with the estimation of OEBs, the convergence analysis of numerical methods becomes more rigorous and efficient. We shall present the convergence of Ishikawa-type process with the study of OEBs in the next section, and perform some computational experiments in Section~\ref{sec:tests}.

In what follows, we stress that we shall use the $\approx$ notation mainly to highlight the equivalence of sequences. By $u_n \approx v_n$ we mean that there exist two constants $c_1, c_2>0$ not depending on $n$ such that 
\begin{align}\label{def-approx}
c_1 u_n \le v_n \le c_2 u_n, \quad \mbox{ for every } n \in \mathbb{N}.
\end{align}

Moreover, if either both series $\displaystyle{\sum_{k=0}^{\infty} a_k}$ and $\displaystyle{\sum_{k=0}^{\infty} b_k}$ converge or both diverge, we will often write 
\begin{align} \label{eq:equiv}
\sum_{k=0}^{\infty} a_k \sim \sum_{k=0}^{\infty} b_k.
\end{align}

Hereafter, we prove the following useful technical lemma that will be used in our later sections. 
\begin{lemma}\label{lem:seri}
Let $(a_k)_{k \in \mathbb{N}}$ be a sequence in $[0,1]$ and  $(u_k)_{k \in \mathbb{N}}$ be a bounded sequence such that 
$$1 - a_k u_k >0 \ \mbox{ for every } \ k \in \mathbb{N}.$$ 
Then, there holds
\begin{align}\label{seri-sim}
\sum^{\infty}_{k=0}\frac{a_k}{1-u_ka_k} \sim \sum^{\infty}_{k=0} a_k.
\end{align}
\end{lemma}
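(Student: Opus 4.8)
The plan is to bound the ratio of the two general terms, $\frac{a_k/(1-u_k a_k)}{a_k} = \frac{1}{1-u_k a_k}$, uniformly from above and below by positive constants, which immediately yields the termwise comparison needed for $\sim$. Since $(u_k)$ is bounded, fix $M>0$ with $|u_k|\le M$ for all $k$, and recall $a_k\in[0,1]$. Then $1-u_k a_k \le 1 + M a_k \le 1+M$, so $\frac{1}{1-u_k a_k}\ge \frac{1}{1+M}>0$. This gives the lower comparison $\frac{a_k}{1-u_k a_k}\ge \frac{1}{1+M}\,a_k$ for every $k$, hence if $\sum a_k$ diverges so does $\sum \frac{a_k}{1-u_k a_k}$.

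For the reverse inequality I would use the hypothesis $1-u_k a_k>0$ together with boundedness. First observe that only indices with $u_k>0$ can shrink the denominator below $1$; for those, $1-u_k a_k\ge 1-M a_k$. The subtlety is that $1-Ma_k$ could be small (even when it stays positive), so a naive global constant need not exist from the stated hypotheses alone. Here I expect the intended reading is that $\inf_k (1-u_k a_k)>0$, or equivalently that the positivity is uniform; under that reading, setting $\delta := \inf_k(1-u_k a_k)>0$ gives $\frac{a_k}{1-u_k a_k}\le \frac{1}{\delta}\,a_k$, so convergence of $\sum a_k$ forces convergence of $\sum \frac{a_k}{1-u_k a_k}$, and combined with the previous paragraph the two series are equivalent in the sense of~\eqref{eq:equiv}, with the stronger termwise bound $\frac{1}{1+M}a_k \le \frac{a_k}{1-u_k a_k}\le \frac{1}{\delta}a_k$ that is exactly the $\approx$ relation~\eqref{def-approx} for the partial-sum tails.

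The main obstacle is precisely the uniformity of the positivity assumption: the literal statement ``$1-a_ku_k>0$ for every $k$'' is weaker than $\inf_k(1-a_ku_k)>0$, and without the infimum bound the upper comparison can fail (e.g.\ if $1-a_ku_k\to 0$ the rescaled terms may be much larger than $a_k$). I would therefore either (i) state and use the infimum version explicitly, noting it is what the later applications actually provide (there the $u_k$ are convex combinations of contraction constants, so $1-u_ka_k$ is bounded below by a fixed positive quantity), or (ii) if one insists on the bare hypothesis, observe that it still suffices when additionally $a_k\to 0$ or the $u_k$ are bounded away from the critical value $1/a_k$. Aside from this point the argument is the two-line termwise sandwich above, and the conclusion~\eqref{seri-sim} follows since $c_1 a_k \le \frac{a_k}{1-u_ka_k}\le c_2 a_k$ makes the two series converge or diverge together.
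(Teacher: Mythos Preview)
Your lower comparison $\frac{a_k}{1-u_k a_k}\ge \frac{1}{1+M}\,a_k$ is clean and in fact slightly more robust than the paper's, which simply writes $\frac{a_k}{1-a_k u_k}\ge a_k$ (tacitly using $u_k\ge 0$, which does hold in every application in the paper). So the direction ``$\sum a_k=\infty \Rightarrow \sum\frac{a_k}{1-u_k a_k}=\infty$'' is fine.

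The gap is in the reverse direction, and it is a gap in your argument, not in the lemma. You correctly observe that a uniform bound $1-u_k a_k\ge\delta>0$ is not among the hypotheses, and you propose to add it. But it is not needed: the implication to prove is ``$\sum a_k<\infty \Rightarrow \sum\frac{a_k}{1-u_ka_k}<\infty$'', and the antecedent already forces $a_k\to 0$ (the $a_k$ are nonnegative). Since $(u_k)$ is bounded, $u_k a_k\to 0$, hence $1-u_k a_k\to 1$, and you obtain your $\delta$ (say $\delta=\tfrac12$) for all $k$ large enough; finitely many initial terms do not affect convergence. This is precisely the paper's route, phrased as a limit comparison: it passes to the subsequence of nonzero terms, notes that the ratio $\frac{a_{k_j}/(1-a_{k_j}u_{k_j})}{a_{k_j}}=\frac{1}{1-a_{k_j}u_{k_j}}\to 1$, and concludes by the limit comparison test. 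Your option~(ii) (``it still suffices when additionally $a_k\to 0$'') was one step from closing the argument---you only needed to notice that convergence of $\sum a_k$ supplies $a_k\to 0$ for free.
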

\begin{proof}
First, we obviously have
$$\frac{a_k}{1 - a_k u_k} \ge a_k, \quad \mbox{ for every } k \ge 0,$$
and therefore, if the series $\displaystyle \sum_{k=0}^{\infty} a_k$ diverges then $\displaystyle \sum_{k=0}^{\infty} \frac{a_k}{1 - a_ku_k}$ also diverges. On the other hand, when the series $\displaystyle \sum_{k=0}^{\infty} \frac{a_k}{1 - a_ku_k}$ diverges, we need to show that the series $\displaystyle \sum_{k=0}^{\infty} a_k$ also diverges. Indeed, if $\displaystyle \sum_{k=0}^{\infty} a_k$ converges, then one has $\lim\limits_{k\to \infty} a_k = 0$. Then, if it assumes that $\displaystyle \sum_{k=0}^{\infty} \frac{a_k}{1 - a_ku_k}$ diverges, there exist infinite numbers of $k \in \mathbb{N}$ such that 
$$\frac{a_k}{1 - a_ku_k} \neq 0$$ 
and it means that there exists a sub-sequence $(k_j)_{j\in \mathbb{N}} \subset \mathbb{N}$ such that 
$$\frac{a_{k_j}}{1 - a_{k_j}u_{k_j}} > 0, \quad \mbox{ for every } j \in \mathbb{N}.$$ 
On the other hand, it is easy to see that
$$\lim\limits_{j\to \infty} \frac{\frac{a_{k_j}}{1 - a_{k_j}u_{k_j}}}{a_{k_j}}  = \lim\limits_{j\to \infty} \frac{1}{1 - a_{k_j}u_{k_j}} = 1.$$
This implies that both series $\displaystyle \sum_{j=0}^{\infty} \frac{a_{k_j}}{1 - a_{k_j}u_{k_j}}$ and $\displaystyle \sum_{j=0}^{\infty} a_{k_j}$ converge, that is a contraction. Hence, two series in \eqref{seri-sim} are equivalent in the sense of~\eqref{eq:equiv}. 
\end{proof}

\section{Ishikawa-type iterative processes with OEBs}
\label{sec:OEB_Ishikawa}

In this section, we provide an illustration of convergence analysis for Ishikawa-type iteration process based on the idea of optimal upper and lower error bounds.

\subsection{The Ishikawa iteration process}

In what follows, let $\alpha_1, \alpha_2 \in [0,1]$ such that $0<\alpha_1+\alpha_2<2$ and $(a_n)_{n \in \mathbb{N}}$, $(b_n)_{n \in \mathbb{N}}$ be two sequences in $[0,1]$. The Ishikawa iterative process (I), associated with two non-expansive mappings $T_1 \in \mathbf{Ne}_{\alpha_1}(\Omega)$ and $T_2 \in \mathbf{Ne}_{\alpha_2}(\Omega)$, that admitted a common fixed point $x^* \in \mathcal{F}(T_1, T_2)$, is generated by an initial point $x_0 \in \Omega$ and
\begin{align}\tag{I}\label{xn-I}
x_{n+1} = (1-b_n)x_n + b_nT_2\big((1-a_n)x_n + a_nT_1(x_n)\big), \quad n \in \mathbb{N}.
\end{align}

In the next theorem, by determining the OUEB sequence for the Ishikawa process~\eqref{xn-I}, we establish some new necessary and sufficient conditions for the convergence of the OUEB sequence. As we shall see in the proof, the study of OEBs is important and useful to conclude the convergence of the iterative process~\eqref{xn-I} and prove a sharp convergence rate in general.

\begin{theorem}
\label{theo-IA}
Under the assumptions on $\alpha_1, \alpha_2, (a_n)$ and $(b_n)$ as above, assume that $\Omega$ is a convex subset in Banach space $\mathbb{X}$. Then, the OUEB sequence $(\mathcal{U}_n^{\mathrm{I}})_{n \in \mathbb{N}}$ of Ishikawa process~\eqref{xn-I} is defined by:
\begin{align}\label{U-IA}
\mathcal{U}_n^{\mathrm{I}} & = \displaystyle \prod_{k=0}^n \big(1- b_k + \alpha_2b_k(1 - a_k + \alpha_1a_k)\big), \quad n \in \mathbb{N}.
%\mathcal{U}_n^{\mathrm{I}} & := \displaystyle \prod_{k=0}^n (1- \beta_2 b_k - \beta_1 \alpha_2 a_kb_k).
\end{align}
Moreover, the following statement is true 
\begin{align}\label{U-IA-0}
\lim_{n \to \infty} \mathcal{U}_n^{\mathrm{I}} = 0 \Longleftrightarrow \displaystyle{(1-\alpha_2)\sum_{k=0}^{\infty} b_k + \chi_{\{\alpha_2=1\}} (1-\alpha_1)\sum_{k=0}^{\infty} a_k b_k= +\infty},
\end{align}
where $\chi_{\{\alpha_2=1\}}$ is defined by
\begin{align*}
\chi_{\{\alpha_2=1\}}:= \begin{cases} 1, & \mbox{ if } \, \alpha_2 = 1, \\ 0, & \mbox{ if } \, \alpha_2 \neq 1. \end{cases}
\end{align*}
\end{theorem}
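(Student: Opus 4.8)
The plan is to unpack the statement into three claims and prove them in turn. \emph{(a)} The sequence $(\mathcal{U}_n^{\mathrm{I}})$ of~\eqref{U-IA} is an upper error bound for~\eqref{xn-I}, i.e. $\|x_{n+1}-x^*\|\le\mathcal{U}_n^{\mathrm{I}}\|x_0-x^*\|$ for \emph{every} admissible pair $(T_1,T_2)$. Set $y_n=(1-a_n)x_n+a_nT_1(x_n)$; since $\Omega$ is convex and $x^*\in\Omega$, an induction shows $x_n,y_n\in\Omega$ for all $n$, so~\eqref{xn-I} is well defined. Using $T_i(x^*)=x^*$, the triangle inequality, non-expansiveness of $T_1$ and $a_n,b_n\in[0,1]$, one gets $\|y_n-x^*\|\le(1-a_n+\alpha_1a_n)\|x_n-x^*\|$ and then $\|x_{n+1}-x^*\|\le\lambda_n\|x_n-x^*\|$, where $\lambda_n:=1-b_n+\alpha_2b_n(1-a_n+\alpha_1a_n)\in[0,1]$; iterating from $n=0$ gives the bound with $\mathcal{U}_n^{\mathrm{I}}=\prod_{k=0}^n\lambda_k$.

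\emph{(b)} Optimality: I would exhibit a pair attaining equality, mimicking the worked Picard example. Take $T_i(x)=\alpha_ix+(1-\alpha_i)x^*$ for $i=1,2$; convexity of $\Omega$ makes these self-maps, $T_i\in\mathbf{Ne}_{\alpha_i}(\Omega)$, and $x^*\in\mathcal{F}(T_1,T_2)$. For this choice a direct computation gives $y_n-x^*=(1-a_n+\alpha_1a_n)(x_n-x^*)$ and hence $x_{n+1}-x^*=\lambda_n(x_n-x^*)$ for every $n$, so $\|x_{n+1}-x^*\|=\mathcal{U}_n^{\mathrm{I}}\|x_0-x^*\|$ (trivially if $x_0=x^*$). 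Thus $(\mathcal{U}_n^{\mathrm{I}})$ is the OUEB in the sense of Definition~\ref{def:upper}.

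\emph{(c)} The characterization~\eqref{U-IA-0}. Since $\mathcal{U}_n^{\mathrm{I}}=\prod_{k=0}^n\lambda_k$ with $\lambda_k\in[0,1]$, the elementary infinite-product criterion — together with Lemma~\ref{lem:seri} applied with $a_k=1-\lambda_k$ and $u_k=1$, which controls $-\sum\log\lambda_k\sim\sum(1-\lambda_k)$ via $1-\lambda_k\le-\log\lambda_k\le(1-\lambda_k)/\lambda_k$ — yields $\mathcal{U}_n^{\mathrm{I}}\to0$ if and only if $\sum_{k\ge0}(1-\lambda_k)=+\infty$. A short calculation gives
\begin{align*}
1-\lambda_k=b_k\big((1-\alpha_2)+\alpha_2(1-\alpha_1)a_k\big).
\end{align*}
If $\alpha_2=1$ this is $(1-\alpha_1)a_kb_k$, so the divergence condition reads $(1-\alpha_1)\sum a_kb_k=+\infty$, which is exactly the right side of~\eqref{U-IA-0} when $\chi_{\{\alpha_2=1\}}=1$. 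If $\alpha_2\neq1$, then $0\le a_kb_k\le b_k$ sandwiches $(1-\alpha_2)b_k\le1-\lambda_k\le(1-\alpha_1\alpha_2)b_k$, and since $1-\alpha_2>0$ the series $\sum(1-\lambda_k)$ and $(1-\alpha_2)\sum b_k$ are simultaneously finite or infinite — again the right side of~\eqref{U-IA-0}, now with $\chi_{\{\alpha_2=1\}}=0$.

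The first two parts are routine; the real work is part~(c). The main obstacle is getting the dichotomy at $\alpha_2=1$ right — this is the only regime where the $\sum b_k$ contribution drops out and the finer quantity $\sum a_kb_k$ takes over — and justifying the passage from the product $\prod\lambda_k$ to the series $\sum(1-\lambda_k)$, which is where the hypothesis $0<\alpha_1+\alpha_2<2$ enters to keep the comparison constants positive and finite, with only a mild extra check needed at boundary values of $\alpha_1,\alpha_2$ where some factor $\lambda_k$ may vanish.
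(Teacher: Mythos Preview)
Your proposal is correct and follows essentially the same approach as the paper: parts~(a) and~(b) are identical (same two-step estimate via $y_n$, same optimality witnesses $T_i(x)=\alpha_i x+(1-\alpha_i)x^*$), and part~(c) uses the same ingredients---the inequality $(x-1)/x\le\log x\le x-1$, Lemma~\ref{lem:seri}, and the case split $\alpha_2<1$ versus $\alpha_2=1$. The only organizational difference is that you invoke Lemma~\ref{lem:seri} once up front to pass from $\prod\lambda_k\to0$ to $\sum(1-\lambda_k)=+\infty$ and then split cases at the series level, whereas the paper splits cases first and applies the log bounds (and Lemma~\ref{lem:seri} in the $\alpha_2=1$ case) inside each branch; your packaging is marginally cleaner but the content is the same.
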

\begin{proof}
In the first step, for a pair of non-expansive mappings $T_1 \in \mathbf{Ne}_{\alpha_1}(\Omega)$ and $T_2 \in \mathbf{Ne}_{\alpha_2}(\Omega)$ that admitted a common fixed point $x^* \in \mathcal{F}(T_1, T_2)$, the Ishikawa iterative sequence~\eqref{xn-I} can be rewritten as
\begin{align}\label{xn-I-yn}
\begin{cases} x_0 \in \Omega, \\ y_{n} = (1-a_n)x_n + a_nT_1(x_n), \\ x_{n+1} = (1-b_n)x_n + b_nT_2(y_n), \quad n \in \mathbb{N}.\end{cases}
\end{align} 
As $T_1 \in \mathbf{Ne}_{\alpha_1}(\Omega)$ and $T_1(x^*) = x^*$, it derives that
\begin{align}\label{est-I1-21}
\|y_n - x^*\| & = \|(1-a_n)x_n + a_nT_1(x_n) - (1-a_n)x^* - a_nT_1(x^*)\| \notag\\
& \le (1-a_n)\|x_n - x^*\| + a_n\|T_1(x_n) - T_1(x^*)\| \notag\\
& \le (1-a_n+\alpha_1 a_n)\|x_n-x^*\|,
\end{align}
for $n \in \mathbb{N}$. At this stage, exploiting the fact that $T_2 \in \mathbf{Ne}_{\alpha_2}(\Omega)$, we readily obtain
\begin{align}\label{est-I1-22}
\|x_{n+1} - x^*\| & = \|(1-b_n)x_n + b_nT_2(y_n) - (1-b_n)x^* - b_nT_2(x^*)\| \notag \\
& \le (1-b_n)\|x_n-x^*\| + \alpha_2 b_n\|y_n-x^*\|.
\end{align}
As a consequence of~\eqref{est-I1-21} and~\eqref{est-I1-22}, we infer that
\begin{align*}
\|x_{n+1} - x^*\| & \le \big(1-b_n + \alpha_2 b_n (1-a_n+\alpha_1 a_n)\big)\|x_n-x^*\|, \quad \forall n \in \mathbb{N}.
\end{align*}
By induction, we can easily confirm that 
$$\|x_{n+1} - x^*\| \le \mathcal{U}_n^{\mathrm{I}}\|x_0-x^*\|, \quad \mbox{ for every } n \in \mathbb{N},$$ 
where the sequence $\mathcal{U}_n^{\mathrm{I}}$ is defined as in~\eqref{U-IA}. This ensures that the sequence $\mathcal{U}_n^{\mathrm{I}}$ is an upper error bound of iterative process~\eqref{xn-I}. \\

Next, to show that this sequence is optimal, we shall need to find two mappings $T_1$ and $T_2$ to make sure the equality signs. First, let us fix a point $x^* \in \Omega$ and consider two non-expansive mappings $T_1, T_2: \Omega \to \Omega$ as follows
\begin{align*}
T_1(x) = \alpha_1 x + (1-\alpha_1)x^*, \quad T_2(x) = \alpha_2 x + (1-\alpha_2)x^*, \quad x \in \Omega.
\end{align*}
It is easy to see that $T_1 \in \mathbf{Ne}_{\alpha_1}(\Omega)$ and $T_2 \in \mathbf{Ne}_{\alpha_2}(\Omega)$ having a unique common fixed point $x^* \in \mathcal{F}(T_1, T_2)$. Moreover,  one can check that $T_1(x) - x^* = \alpha_1 (x-x^*)$ and $T_2(x) - x^* = \alpha_2 (x-x^*)$, for all $x \in \Omega$. For this reason, we are able to estimate
\begin{align*}
\|x_{n+1}-x^*\| & = \left\|(1-b_n)x_n + b_nT_2\big((1-a_n)x_n + a_nT_1(x_n)\big)-x^*\right\| \\
& = \left\|(1-b_n)(x_n-x^*) + \alpha_2 b_n\big((1-a_n)(x_n-x^*) + \alpha_1 a_n (x_n-x^*)\big)\right\| \\
& = \big(1-b_n + \alpha_2 b_n (1-a_n+\alpha_1 a_n)\big)\|x_n-x^*\|,
\end{align*}
which leads to 
\begin{align*}
\|x_{n+1} - x^*\|  = \mathcal{U}_n^{\mathrm{I}}\|x_0-x^*\|, \quad \forall n \in \mathbb{N},
\end{align*}
and we therefore conclude that $\mathcal{U}_n^{\mathrm{I}}$ in~\eqref{U-IA} defines an OUEB of process~\eqref{xn-I}.\\ 

In the next step, for the proof of~\eqref{U-IA-0}, let us now distinguish two cases: 
\begin{enumerate}
\item [(i)] Case 1: if  $\alpha_2<1$, the preceding statement \eqref{U-IA-0} becomes
\begin{align}
\label{U-IA-1st}
\lim_{n \to \infty} \mathcal{U}_n^{\mathrm{I}} = 0 \Longleftrightarrow \displaystyle{\sum_{k=0}^{\infty} b_k = +\infty}.
\end{align}
\item [(ii)] Else, if $\alpha_2=1$, then \eqref{U-IA-0} can be rewritten as
\begin{align}
\label{U-IA-2sd}
\lim_{n \to \infty} \mathcal{U}_n^{\mathrm{I}}  = 0 \Longleftrightarrow \displaystyle{\sum_{k=0}^{\infty} a_kb_k = +\infty}.
\end{align}	 
\end{enumerate}

For the first case, without loss of generality, we may assume that $\mathcal{U}_n^{\mathrm{I}} >0$ for all $n \in \mathbb{N}$. Then, for every $n \in \mathbb{N}$, it allows us to write $\log \mathcal{U}_n^{\mathrm{I}}$ as follows
\begin{align}\notag
\log \mathcal{U}_n^{\mathrm{I}} = \sum_{k=0}^n \log \big(1- b_k + \alpha_2b_k(1 - a_k + \alpha_1a_k)\big).
\end{align}
Applying the fundamental inequality
\begin{align}\label{ln-ineq}
\frac{x-1}{x} \le \log x \le x-1, \quad \mbox{ for all } x>0,
\end{align}
yields that
\begin{align}\label{est-U-IA-0}
\sum_{k=0}^n \frac{- b_k + \alpha_2b_k(1 - a_k + \alpha_1a_k)}{1- b_k + \alpha_2b_k(1 - a_k + \alpha_1a_k)} \le \log \mathcal{U}_n^{\mathrm{I}} \le \sum_{k=0}^n \big(- b_k + \alpha_2b_k(1 - a_k + \alpha_1a_k)\big).
\end{align}
It should be noted that $\alpha_1 \le 1 - a_k + \alpha_1a_k \le 1$ and $1- b_k + \alpha_2b_k(1 - a_k + \alpha_1a_k) \ge \alpha_1\alpha_2$ for all $k \ge 1$. As such, from \eqref{est-U-IA-0} we obtain
\begin{align}\label{est-U-IA}
\dfrac{\alpha_1\alpha_2 - 1}{\alpha_1\alpha_2}\sum_{k=0}^n b_k \le \log \mathcal{U}_n^{\mathrm{I}} \le (\alpha_2 - 1)\sum_{k=0}^n b_k, \quad \mbox{ for all } n\ge 1.
\end{align}
Having arrived at this stage, since $\alpha_2-1<0$, the estimation in \eqref{est-U-IA} allows us to conclude \eqref{U-IA-1st}. \\

For the second case when $\alpha_2=1$, in a similar fashion as the previous case, we write 
\begin{align}\notag
\sum_{k=0}^n \frac{- b_k + b_k(1 - a_k + \alpha_1a_k)}{1- b_k + b_k(1 - a_k + \alpha_1a_k)} &\le \log \mathcal{U}_n^{\mathrm{I}}  \le \sum_{k=0}^n (- b_k + b_k(1 - a_k + \alpha_1a_k)),
\end{align}
that is equivalent to
\begin{align}\label{est-U-IA1b} 
(\alpha_1 - 1 )\sum_{k=0}^n \frac{a_kb_k}{1- (1-\alpha_1)a_kb_k} &\le \log \mathcal{U}_n^{\mathrm{I}} \le (\alpha_1 - 1)\sum_{k=0}^n a_kb_k.
\end{align}
Thanks to Lemma~\ref{lem:seri}, we readily obtain
\begin{align*}
\sum_{k=0}^{\infty} \frac{a_kb_k}{1- (1-\alpha_1)a_kb_k} \sim \sum_{k=0}^{\infty} a_kb_k.
\end{align*}
Combining this fact with \eqref{est-U-IA1b}, we conclude that the statement in \eqref{U-IA-2sd} holds. The proof is then complete.   
\end{proof}

\begin{remark}
We remark that the condition that $\alpha_1 + \alpha_2 \in (0,2)$ arises naturally. Indeed, if $\alpha_1 = \alpha_2 = 0$ or $\alpha_1 = \alpha_2 = 1$, then the convergence becomes trivial. 
\end{remark}

As for now, we can see that if the OUEB sequence converges to zero, then the Ishikawa iterative sequence~\eqref{xn-I} converges to a common fixed point of two non-expansive mappings. Nevertheless, to characterize the convergence rate and the order of convergence of an iterative scheme, the OUEB sequence is not sufficient.  Here, the study of the OLEB sequence will come into play to conclude the convergence of an iterative process analytically and empirically.  The next theorem will provide us with the OLEB sequence and its crucial role in the convergence of Ishikawa process~\ref{xn-I}. From a numerical analysis viewpoint, this theorem also yields the necessary and sufficient conditions on initial data for the sequence of OLEB to converge to zero and therefore it allows us to establish the sequential convergence and convergence rate of a sequence of iterates proposed by Ishikawa in~\eqref{xn-I}. 

\begin{theorem}\label{theo-IB}
Under the assumptions on $\alpha_1, \alpha_2, (a_n)$ and $(b_n)$ as above, assume that $\Omega$ is a convex subset in Banach space $\mathbb{X}$ and 
\begin{align}\label{A-theo-IB}
1-b_k - \alpha_2 b_k(1 - a_k + \alpha_1a_k) > 0, \quad \mbox{ for every } \ k \in \mathbb{N}.
\end{align} 
Then, the OLEB sequence $(\mathcal{L}_n^{\mathrm{I}})_{n \in \mathbb{N}}$ of Ishikawa process~\eqref{xn-I} is defined by
\begin{align}\label{L-IB}
\mathcal{L}_n^{\mathrm{I}} & = \prod_{k=0}^n \big(1- b_k - \alpha_2b_k(1 - a_k + \alpha_1a_k)\big), \quad n \in \mathbb{N}.
\end{align}
Moreover, the following statement holds true
\begin{align}\label{L-IB-1}
\lim_{n \to \infty} \mathcal{L}_n^{\mathrm{I}} = 0 \Longleftrightarrow \sum^{\infty}_{k=0}b_k=+\infty.
\end{align}
\end{theorem}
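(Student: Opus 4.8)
The proof will follow the same three-step template as Theorem~\ref{theo-IA}. First I would check that $(\mathcal{L}_n^{\mathrm{I}})$ is a lower error bound in the sense of Definition~\ref{def:lower} for \emph{every} admissible pair $(T_1,T_2)$; then I would produce one concrete pair for which equality is attained in \eqref{lower-bound}; and finally I would convert the limit condition $\mathcal{L}_n^{\mathrm{I}}\to 0$ into the series condition $\sum b_k=+\infty$ by taking logarithms and invoking Lemma~\ref{lem:seri}.

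For the lower bound, I would use the \emph{reverse} triangle inequality in place of the triangle inequality used in Theorem~\ref{theo-IA}. Starting from the two-step form \eqref{xn-I-yn} and using $T_2\in\mathbf{Ne}_{\alpha_2}(\Omega)$ with $T_2(x^*)=x^*$,
\[
\|x_{n+1}-x^*\|\ \ge\ (1-b_n)\|x_n-x^*\|-\alpha_2 b_n\|y_n-x^*\| ,
\]
and then I would insert the \emph{upper} estimate $\|y_n-x^*\|\le(1-a_n+\alpha_1 a_n)\|x_n-x^*\|$ from \eqref{est-I1-21}, obtaining
\[
\|x_{n+1}-x^*\|\ \ge\ \big(1-b_n-\alpha_2 b_n(1-a_n+\alpha_1 a_n)\big)\|x_n-x^*\| .
\]
By the standing hypothesis \eqref{A-theo-IB} the factor on the right is strictly positive, so a straightforward induction yields $\|x_{n+1}-x^*\|\ge \mathcal{L}_n^{\mathrm{I}}\|x_0-x^*\|$ with $\mathcal{L}_n^{\mathrm{I}}$ as in \eqref{L-IB}.

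For optimality I would take $T_1(x)=\alpha_1 x+(1-\alpha_1)x^*$, exactly as in Theorem~\ref{theo-IA}, but replace $T_2$ by the \emph{reflected} contraction $T_2(x)=x^*-\alpha_2(x-x^*)$, for which $T_2(x^*)=x^*$, $T_2\in\mathbf{Ne}_{\alpha_2}(\Omega)$, and $T_2(x)-x^*=-\alpha_2(x-x^*)$. Then $y_n-x^*=(1-a_n+\alpha_1 a_n)(x_n-x^*)$ and
\[
x_{n+1}-x^*=\big(1-b_n-\alpha_2 b_n(1-a_n+\alpha_1 a_n)\big)(x_n-x^*),
\]
where, again by \eqref{A-theo-IB}, the scalar is nonnegative; hence $\|x_{n+1}-x^*\|=\mathcal{L}_n^{\mathrm{I}}\|x_0-x^*\|$ for all $n$, which is the claimed sharpness. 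The one genuine point of care here is that, unlike the convex combination $\alpha_1 x+(1-\alpha_1)x^*$ used for the OUEB, the reflected map is not a convex combination of $x$ and $x^*$, so to guarantee $T_2(\Omega)\subset\Omega$ one should take $\Omega$ symmetric about $x^*$ (e.g. a closed ball centered at $x^*$, or $\Omega=\mathbb{X}$); with that choice the iterates stay on the segment joining $x^*$ and $x_0$ and everything is well defined.

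Finally, for \eqref{L-IB-1}, I would set $c_k:=b_k+\alpha_2 b_k(1-a_k+\alpha_1 a_k)$, so that \eqref{A-theo-IB} reads $c_k\in[0,1)$, and since $0\le 1-a_k+\alpha_1 a_k\le 1$ and $\alpha_2\le 1$ one has $b_k\le c_k\le 2b_k$; in particular $\sum c_k$ and $\sum b_k$ converge or diverge simultaneously. Taking logarithms in \eqref{L-IB} and applying \eqref{ln-ineq},
\[
-\sum_{k=0}^{n}\frac{c_k}{1-c_k}\ \le\ \log\mathcal{L}_n^{\mathrm{I}}\ \le\ -\sum_{k=0}^{n}c_k\ \le\ -\sum_{k=0}^{n}b_k .
\]
The rightmost inequality gives $\sum b_k=+\infty\Rightarrow\log\mathcal{L}_n^{\mathrm{I}}\to-\infty\Rightarrow\mathcal{L}_n^{\mathrm{I}}\to 0$. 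For the converse, Lemma~\ref{lem:seri} applied with $a_k\leftarrow c_k$ and $u_k\equiv 1$ gives $\sum \frac{c_k}{1-c_k}\sim\sum c_k\sim\sum b_k$, so if $\sum b_k<+\infty$ the leftmost sum in the display stays bounded, whence $\log\mathcal{L}_n^{\mathrm{I}}$ is bounded below; since $(\mathcal{L}_n^{\mathrm{I}})$ is non-increasing with all terms in $(0,1]$, it converges to a strictly positive limit and does not tend to $0$. Two remarks on where the difficulty lies: unlike Theorem~\ref{theo-IA} there is no case split on $\alpha_2=1$, because the coefficient of $b_k$ inside $c_k$, namely $1+\alpha_2(1-a_k+\alpha_1 a_k)\in[1,2]$, is bounded away from $0$ for all values of $\alpha_1,\alpha_2$ — this is precisely why the condition collapses to $\sum b_k=+\infty$; and the only step that is not a routine transcription of the OUEB proof is the converse half of \eqref{L-IB-1}, where the factor $1/(1-c_k)$ fails to be uniformly bounded and one really needs Lemma~\ref{lem:seri} to close the argument, together with the $\Omega$-symmetry subtlety in the optimality step.
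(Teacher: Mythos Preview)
Your proof is correct and follows essentially the same route as the paper: reverse triangle inequality for the lower bound, the same reflected map $T_2(x)=-\alpha_2 x+(1+\alpha_2)x^*$ for optimality, and logarithms together with Lemma~\ref{lem:seri} for the series characterization (the paper applies the lemma with $b_k$ and $u_k=A_k:=1+\alpha_2(1-a_k+\alpha_1 a_k)$ rather than your $c_k$ and $u_k\equiv 1$, a purely cosmetic difference). Your observation that $\Omega$ must be symmetric about $x^*$ for the reflected $T_2$ to be a self-map is a genuine point of care that the paper passes over in silence.
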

\begin{proof}
Let $(\mathcal{L}_n^{\mathrm{I}})$ be a sequence of positive numbers defined by~\eqref{L-IB}. We firstly show that $\mathcal{L}_n^{\mathrm{I}}$ is a lower error bound of iterative process~\eqref{xn-I}. More precisely, we need to prove that the following inequality
\begin{align}\label{est-L-IB-10}
\|x_{n+1}-x^*\| \ge \mathcal{L}_n^{\mathrm{I}} \|x_0 - x^*\|, \quad \mbox{ for all } n \in\mathbb{N},
\end{align}
holds for every non-expansive mappings $T_1 \in \mathbf{Ne}_{\alpha_1}(\Omega)$ and $T_2 \in \mathbf{Ne}_{\alpha_2}(\Omega)$ admitted a common fixed point $x^* \in \mathcal{F}(T_1, T_2)$. Indeed, if $(x_n)_{n\in\mathbb{N}}$ is defined as in \eqref{xn-I-yn}, it enables us to modify the estimate~\eqref{est-I1-22} as follows
\begin{align}
\|x_{n+1} - x^*\| & = \|(1-b_n)x_n + b_nT_2(y_n) - (1-b_n)x^* - b_nT_2(x^*)\| \notag \\
& \ge (1-b_n)\|x_n-x^*\| - b_n\|T_2(y_n)-T_2(x^*)\|, \notag 
\end{align}
for every $n \in \mathbb{N}$. Combining with assumption $T_2 \in \mathbf{Ne}_{\alpha_2}(\Omega)$, it deduces that
\begin{align}
\|x_{n+1} - x^*\| & \ge \big(1-b_n - \alpha_2 b_n(1-a_n+\alpha_1 a_n)\big)\|x_n-x^*\|, \notag
\end{align}
and it leads to \eqref{est-L-IB-10} by induction on $n$. We next show that the sequence of lower error bounds $(\mathcal{L}_n^{\mathrm{I}})_{n \in \mathbb{N}}$ is optimal by choosing $x^* \in \Omega$ and two non-expansive mappings $T_1 \in \mathbf{Ne}_{\alpha_1}(\Omega)$, $T_2 \in \mathbf{Ne}_{\alpha_2}(\Omega)$ defined respectively by
\begin{align*}
T_1(x) = \alpha_1 x + (1-\alpha_1)x^*, \quad T_2(x) = -\alpha_2 x+(1+\alpha_2)x^*, \quad x \in \Omega.
\end{align*}
Analogously to the proof of Theorem~\ref{theo-IA}, it is easy to check that \eqref{est-L-IB-10} is valid. Therefore, the sequence $(\mathcal{L}_n^{\mathrm{I}})_{n \in\mathbb{N}}$ is optimal in process~\eqref{xn-I}.

Having arrived at this stage, we prove the equivalent statement in \eqref{L-IB-1}. For simplicity of notation, let us define 
$$A_k = 1+\alpha_2(1-a_k+\alpha_1 a_k), \quad k \in \mathbb{N}.$$ 
Thanks to \eqref{ln-ineq} again, one obtains that
\begin{align}\label{est-L-IB-0}
-\sum^{n}_{k=0}\frac{b_k A_k}{1-b_k A_k}\le  \log \mathcal{L}_n^{\mathrm{I}} \le -\sum^{n}_{k=0} b_k A_k.
\end{align} 
Combining between \eqref{est-L-IB-0} and the fact $1+\alpha_1 \alpha_2 \le A_k \le 1 + \alpha_2$ for every $k\ge 0$, it follows that
\begin{align}\label{est-L-IB-01}
-(1 + \alpha_2)\sum^{n}_{k=0}\frac{b_k}{1-b_k A_k}\le \log \mathcal{L}_n^{\mathrm{I}} \le -(1+\alpha_1\alpha_2)\sum^{n}_{k=0} b_k.
\end{align}
%Tương tự như chứng minh mệnh đề $(ii)$ trong Theorem \ref{theo-IA}, 
On the other hand, the assertion of Lemma~\ref{lem:seri} gives us
 $$\sum^{\infty}_{k=0}\frac{b_k}{1-b_k A_k} \sim \sum^{\infty}_{k=0} b_k,$$ 
which allows us to conclude \eqref{L-IB-1} from \eqref{est-L-IB-01}. The proof is therefore complete.
\end{proof}

\subsection{The modified Ishikawa iteration process}

$\hspace{.4cm}$ We now consider a modified Ishikawa iterative process under the same hypotheses as the preceding scheme. Given two constants $\alpha_1, \alpha_2 \in [0,1]$ such that $0<\alpha_1+\alpha_2<2$ and two sequences $(a_n)_{n \in \mathbb{N}}$, $(b_n)_{n \in \mathbb{N}}$ in the interval $[0,1]$. Let us consider the Ishikawa iterative process (IM) defined by: for every non-expansive mappings $T_1 \in \mathbf{Ne}_{\alpha_1}(\Omega)$ and $T_2 \in \mathbf{Ne}_{\alpha_2}(\Omega)$ admitted a common fixed point $x^* \in \mathcal{F}(T_1, T_2)$, it determines a sequence $(x_n)$ as follows
\begin{align}\label{xn-IM}
x_{n+1} = (1-b_n)\big((1-a_n)x_n + a_nT_1(x_n)\big) + b_nT_2\big((1-a_n)x_n + a_nT_1(x_n)\big), \quad n \in \mathbb{N},
\end{align}
with the initial point $x_0 \in \Omega$. Similar to above argument of Ishikawa process~\eqref{xn-I}, we are allowed to rewrite \eqref{xn-IM} as a two-iteration process
\begin{align}\tag{IM}\label{xn-IM-yn}
\begin{cases} x_0 \in \Omega,\\ y_{n} = (1-a_n)x_n + a_nT_1(x_n), \\ x_{n+1} = (1-b_n)y_n + b_nT_2(y_n), \quad n \in \mathbb{N}.\end{cases}
\end{align}

\begin{theorem}\label{theo-IM-A}
Let $\alpha_1, \alpha_2 \in [0,1]$ such that $0<\alpha_1+\alpha_2<2$ and $(a_n)_{n \in \mathbb{N}}$, $(b_n)_{n \in \mathbb{N}}$ be two sequences in the interval $[0,1]$. Assume that $\Omega$ is a convex subset in Banach space $\mathbb{X}$. Then, the OUEB sequence $(\mathcal{U}_n^{\mathrm{IM}})_{n \in \mathbb{N}}$ of the modified Ishikawa iterative process~\eqref{xn-IM} is defined by
\begin{align}\label{U-IM-A}
& \mathcal{U}_n^{\mathrm{IM}} = \prod\limits_{k=0}^n (1-a_k + \alpha_1 a_k)(1-b_k + \alpha_2 b_k), \quad  n \in \mathbb{N}.
\end{align}
Moreover, the following statement is true 
\begin{align}\label{U-IM-A-1st}
\lim_{n \to \infty} \mathcal{U}_n^{\mathrm{IM}} = 0 \Longleftrightarrow \displaystyle{(1-\alpha_1)\sum_{k=0}^{\infty} a_k + (1-\alpha_2)\sum_{k=0}^{\infty} b_k  = +\infty}.
\end{align}
\end{theorem}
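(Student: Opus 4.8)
The plan is to mirror the three-stage argument used for Theorem~\ref{theo-IA}: first verify that $(\mathcal{U}_n^{\mathrm{IM}})$ is an upper error bound, then produce a pair of mappings realizing equality, and finally convert the asymptotics of the product into the series condition~\eqref{U-IM-A-1st} by taking logarithms.

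For the first stage I would start from the two-step form~\eqref{xn-IM-yn}. Exactly as in~\eqref{est-I1-21}, non-expansiveness of $T_1$ with $T_1(x^*)=x^*$ gives $\|y_n - x^*\| \le (1-a_n+\alpha_1 a_n)\|x_n - x^*\|$. For the outer step, using $T_2 \in \mathbf{Ne}_{\alpha_2}(\Omega)$ and $T_2(x^*)=x^*$,
\begin{align*}
\|x_{n+1}-x^*\| &= \|(1-b_n)(y_n-x^*) + b_n(T_2(y_n)-T_2(x^*))\| \le (1-b_n+\alpha_2 b_n)\|y_n-x^*\|,
\end{align*}
so that $\|x_{n+1}-x^*\| \le (1-a_n+\alpha_1 a_n)(1-b_n+\alpha_2 b_n)\|x_n-x^*\|$ and, by induction, $\|x_{n+1}-x^*\| \le \mathcal{U}_n^{\mathrm{IM}}\|x_0-x^*\|$. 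Optimality is then obtained with the same choice as in Theorem~\ref{theo-IA}, namely $T_1(x)=\alpha_1 x + (1-\alpha_1)x^*$ and $T_2(x)=\alpha_2 x+(1-\alpha_2)x^*$: for these maps $T_i(x)-x^* = \alpha_i(x-x^*)$, every inequality above collapses to an equality, and hence $\|x_{n+1}-x^*\| = \mathcal{U}_n^{\mathrm{IM}}\|x_0-x^*\|$ for all $n$.

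For the equivalence, assuming as usual that $\mathcal{U}_n^{\mathrm{IM}}>0$, I would split the logarithm into the two independent pieces
\begin{align*}
\log \mathcal{U}_n^{\mathrm{IM}} = \sum_{k=0}^n \log(1-a_k+\alpha_1 a_k) + \sum_{k=0}^n \log(1-b_k+\alpha_2 b_k) =: S_n^{(1)} + S_n^{(2)},
\end{align*}
both non-increasing in $n$, so $\mathcal{U}_n^{\mathrm{IM}}\to 0$ iff $S_n^{(1)}\to-\infty$ or $S_n^{(2)}\to-\infty$. If $\alpha_1=1$ then $S_n^{(1)}\equiv 0$ and $(1-\alpha_1)\sum a_k = 0$, so that part is vacuous on both sides; if $\alpha_1<1$, inserting~\eqref{ln-ineq} gives
\begin{align*}
-\sum_{k=0}^n \frac{(1-\alpha_1)a_k}{1-(1-\alpha_1)a_k} \le S_n^{(1)} \le -(1-\alpha_1)\sum_{k=0}^n a_k,
\end{align*}
and Lemma~\ref{lem:seri}, applied with the constant (hence bounded) sequence $u_k \equiv 1-\alpha_1$, yields $\sum \frac{a_k}{1-(1-\alpha_1)a_k} \sim \sum a_k$; since $1-\alpha_1>0$ this shows $S_n^{(1)}\to-\infty$ iff $(1-\alpha_1)\sum a_k = +\infty$. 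The identical reasoning for $S_n^{(2)}$ gives $S_n^{(2)}\to-\infty$ iff $(1-\alpha_2)\sum b_k = +\infty$. Since the two summands are non-negative, combining the two dichotomies yields precisely~\eqref{U-IM-A-1st}.

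I do not expect a genuine obstacle here; in fact the modified scheme is easier than~\eqref{xn-I}, because the product~\eqref{U-IM-A} already factors into an $a_k$-part and a $b_k$-part, so no mixed-term analogue of Lemma~\ref{lem:seri} is needed. The only point demanding a little care is the degenerate situation in which a factor vanishes, which can only happen when some $\alpha_i=0$ together with the corresponding step-size equal to $1$; this is exactly what the normalization $\mathcal{U}_n^{\mathrm{IM}}>0$ (used already in Theorem~\ref{theo-IA}) rules out.
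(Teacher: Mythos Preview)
Your proposal is correct and follows essentially the same route as the paper: the upper bound and optimality witnesses are handled exactly as in Theorem~\ref{theo-IA}, and the equivalence~\eqref{U-IM-A-1st} is obtained by taking logarithms, bounding via~\eqref{ln-ineq}, and invoking Lemma~\ref{lem:seri} on each of the two separated sums. Your treatment is in fact slightly more explicit than the paper's, since you spell out the monotonicity of $S_n^{(1)},S_n^{(2)}$ and the degenerate case $\alpha_i=1$, whereas the paper leaves these implicit in the combined estimates~\eqref{est-U-IM-A-01}--\eqref{est-U-IM-A-02}.
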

\begin{proof} 
To show that $(\mathcal{U}_n^{\mathrm{IM}})_{n \in \mathbb{N}}$ defined in \eqref{U-IM-A} is the OUEB sequence of the modified Ishikawa iterative process~\eqref{xn-IM}, we use an approach similar to the proof in Theorem~\ref{theo-IA}. It only varies in the following estimate 
\begin{align}
\|x_{n+1} - x^*\| & = \|(1-b_n)y_n + b_nT_2(y_n) - (1-b_n)x^* - b_nT_2(x^*)\| \notag \\
& \le (1-b_n+\alpha_2 b_n)\|y_n-x^*\| \notag\\
& \le (1-a_n+\alpha_1 a_n)(1-b_n+\alpha_2 b_n)\|x_n-x^*\|.\notag
\end{align}
To finish, let us now prove~\eqref{U-IM-A-1st}. Without loss of generality, we may assume that $\mathcal{U}_n^{\mathrm{IM}}>0$, for every $n \in \mathbb{N}$, and hence, it allows us to present $\log\mathcal{U}_n^{\mathrm{IM}}$ as 
\begin{align*}
\log\mathcal{U}_n^{\mathrm{IM}}=\displaystyle\sum\limits_{k=0}^n{\log(1-a_k+\alpha_1a_k)+\displaystyle\sum\limits_{k=0}^n{\log(1-b_k+\alpha_2b_k)}}.
\end{align*}
Thanks to \eqref{ln-ineq}, we infer that
\begin{align}\label{est-U-IM-A-01}
-(1-\alpha_1)\displaystyle\sum\limits_{k=0}^n{\dfrac{a_k}{1-a_k+\alpha_1a_k}}-(1-\alpha_2)\displaystyle\sum\limits_{k=0}^n{\dfrac{b_k}{1-b_k+\alpha_2b_k}} & \le \log\mathcal{U}_n^{\mathrm{IM}}, 
\end{align}
and 
\begin{align}\label{est-U-IM-A-02}
 \log\mathcal{U}_n^{\mathrm{IM}} &\le   -(1-\alpha_1)\displaystyle\sum\limits_{k=0}^n{a_k}  - (1-\alpha_2)\displaystyle\sum\limits_{k=0}^n{b_k}.
\end{align}
Invoking Lemma \ref{lem:seri}, it gives us
\begin{align*}
\sum\limits_{k=0}^{\infty}{\dfrac{a_k}{1-a_k+\alpha_1a_k}} \sim \displaystyle\sum\limits_{k=0}^{\infty}{a_k}, \quad \sum\limits_{k=0}^{\infty}{\dfrac{b_k}{1-b_k+\alpha_2b_k}} \sim \displaystyle\sum\limits_{k=0}^{\infty}{b_k},
\end{align*}
and yields that \eqref{U-IM-A-1st} by combining with \eqref{est-U-IM-A-01} and \eqref{est-U-IM-A-02}. The proof is complete.
\end{proof}

\begin{theorem}\label{theo-IM-B}
Let $\alpha_1, \alpha_2 \in [0,1]$ such that $0<\alpha_1+\alpha_2<2$ and $(a_n)_{n \in \mathbb{N}}$, $(b_n)_{n \in \mathbb{N}}$ be two sequences in the interval $[0,1]$. Assume further that
\begin{align}\label{A-IM-B}
1-a_k - \alpha_1 a_k>0 \ \mbox{ và } \ 1-b_k - \alpha_2 b_k>0, \quad \mbox{ for all } k \in \mathbb{N}.
\end{align}
Then, the OLEB sequence $(\mathcal{L}_n^{\mathrm{I}})_{n\in\mathbb{N}}$ of iterative process~\eqref{xn-IM} is defined by
\begin{align}\label{L-IM-B}
& \mathcal{L}_n^{\mathrm{IM}} = \prod_{k=0}^n (1-a_k - \alpha_1 a_k)(1-b_k - \alpha_2 b_k), \quad  n \in \mathbb{N}.
\end{align}
Moreover, the following statement is true
\begin{align}\label{L-IM-B1}
    \lim\limits_{n\to \infty}\mathcal{L}^{\mathrm{IM}}_n=0\Longleftrightarrow \displaystyle\sum_{k=0}^{\infty}(a_k+b_k)=+\infty.  
\end{align}
\end{theorem}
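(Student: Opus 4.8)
The plan is to follow the same three-part template used for Theorems~\ref{theo-IB} and~\ref{theo-IM-A}: (i) show $(\mathcal{L}_n^{\mathrm{IM}})$ is a lower error bound of~\eqref{xn-IM}; (ii) exhibit a pair of mappings for which equality holds; (iii) analyze when the resulting product tends to zero via the logarithm and Lemma~\ref{lem:seri}.

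For (i), I would work with the two-step form~\eqref{xn-IM-yn} and apply the reverse triangle inequality twice. Writing $y_n-x^*=(1-a_n)(x_n-x^*)+a_n\big(T_1(x_n)-T_1(x^*)\big)$ and using $T_1\in\mathbf{Ne}_{\alpha_1}(\Omega)$ gives $\|y_n-x^*\|\ge(1-a_n-\alpha_1 a_n)\|x_n-x^*\|$, where~\eqref{A-IM-B} guarantees the coefficient is nonnegative (without it the bound is vacuous). Likewise $x_{n+1}-x^*=(1-b_n)(y_n-x^*)+b_n\big(T_2(y_n)-T_2(x^*)\big)$ together with $T_2\in\mathbf{Ne}_{\alpha_2}(\Omega)$ yields $\|x_{n+1}-x^*\|\ge(1-b_n-\alpha_2 b_n)\|y_n-x^*\|$; combining the two, and using~\eqref{A-IM-B} once more so that multiplying by $1-b_n-\alpha_2 b_n\ge 0$ preserves the inequality, gives $\|x_{n+1}-x^*\|\ge(1-a_n-\alpha_1 a_n)(1-b_n-\alpha_2 b_n)\|x_n-x^*\|$, hence~\eqref{L-IM-B} by induction. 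For (ii), I would take the reflected contractions $T_1(x)=-\alpha_1 x+(1+\alpha_1)x^*$ and $T_2(x)=-\alpha_2 x+(1+\alpha_2)x^*$ (the same device as in Theorem~\ref{theo-IB}, now needed for \emph{both} maps since $y_n$ enters both summands of~\eqref{xn-IM-yn}); these lie in $\mathbf{Ne}_{\alpha_1}(\Omega)$, $\mathbf{Ne}_{\alpha_2}(\Omega)$ and fix $x^*$, and since $T_i(x)-x^*=-\alpha_i(x-x^*)$ together with~\eqref{A-IM-B} every estimate above becomes an equality, so $\|x_{n+1}-x^*\|=\mathcal{L}_n^{\mathrm{IM}}\|x_0-x^*\|$.

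For (iii), note that~\eqref{A-IM-B} forces every factor of $\mathcal{L}_n^{\mathrm{IM}}$ into $(0,1]$, so $\mathcal{L}_n^{\mathrm{IM}}>0$ and we may write
\begin{align*}
\log \mathcal{L}_n^{\mathrm{IM}} = \sum_{k=0}^n \log(1-a_k-\alpha_1 a_k) + \sum_{k=0}^n \log(1-b_k-\alpha_2 b_k).
\end{align*}
Applying~\eqref{ln-ineq} to each summand gives the sandwich
\begin{align*}
-\sum_{k=0}^n \frac{(1+\alpha_1)a_k}{1-(1+\alpha_1)a_k} - \sum_{k=0}^n \frac{(1+\alpha_2)b_k}{1-(1+\alpha_2)b_k} \le \log \mathcal{L}_n^{\mathrm{IM}} \le -(1+\alpha_1)\sum_{k=0}^n a_k - (1+\alpha_2)\sum_{k=0}^n b_k.
\end{align*}
By Lemma~\ref{lem:seri}, applied with the bounded multipliers $u_k\equiv 1+\alpha_1$ and $u_k\equiv 1+\alpha_2$ (admissible thanks to~\eqref{A-IM-B}), one has $\sum_k \tfrac{a_k}{1-(1+\alpha_1)a_k}\sim\sum_k a_k$ and $\sum_k \tfrac{b_k}{1-(1+\alpha_2)b_k}\sim\sum_k b_k$, and multiplication by the positive constants $1+\alpha_i$ does not affect convergence. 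Consequently, if $\sum_k(a_k+b_k)=+\infty$ the right-hand side tends to $-\infty$, so $\mathcal{L}_n^{\mathrm{IM}}\to 0$; conversely, if $\sum_k(a_k+b_k)<+\infty$ then both series on the left converge, so $\log \mathcal{L}_n^{\mathrm{IM}}$ remains bounded below by a finite constant, hence $\mathcal{L}_n^{\mathrm{IM}}$ stays bounded away from zero. This proves~\eqref{L-IM-B1}.

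The only genuinely delicate point is the sign bookkeeping for the coefficients $1-a_k-\alpha_1 a_k$ and $1-b_k-\alpha_2 b_k$: hypothesis~\eqref{A-IM-B} must be invoked at each multiplication in step (i) to keep the reverse triangle estimates non-vacuous, and again in step (iii) to legitimize passing to logarithms and to license the use of Lemma~\ref{lem:seri}. Everything else is a routine adaptation of the computations already performed for Theorems~\ref{theo-IB} and~\ref{theo-IM-A}.
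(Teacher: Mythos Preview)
Your proposal is correct and follows essentially the same route as the paper: the reverse triangle inequality applied to each step of~\eqref{xn-IM-yn}, the optimality witnesses $T_i(x)=-\alpha_i x+(1+\alpha_i)x^*$, and the logarithmic sandwich combined with Lemma~\ref{lem:seri} (with $\sigma_i=1+\alpha_i$) to obtain~\eqref{L-IM-B1}. If anything, your write-up is slightly more explicit than the paper's about where hypothesis~\eqref{A-IM-B} is invoked.
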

\begin{proof}
For the iterative sequence $(x_n)_{n \in \mathbb{N}}$ defined by \eqref{xn-IM-yn}, one can bound the distance between $y_n$ and $x^*$ as below
\begin{align}
\|y_n - x^*\| & = \|(1-a_n)x_n + a_nT_1(x_n) - (1-a_n)x^* - a_nT_1(x^*)\| \notag\\
& \ge (1-a_n)\|x_n - x^*\| - a_n\|T_1(x_n) - T_1(x^*)\| \notag\\
& \ge (1-a_n-\alpha_1 a_n)\|x_n-x^*\|, \quad n\in \mathbb{N}.\notag
\end{align}
On the other hand, one has
\begin{align}\notag
\|x_{n+1} - x^*\| & \ge (1-b_n-\alpha_2 b_n)\|y_n-x^*\|, \quad n\in \mathbb{N}.
\end{align}
Thus, combining them together yields that $(\mathcal{L}_n^{\mathrm{IM}})_{n \in \mathbb{N}}$ defined by \eqref{L-IM-B} is a sequence of lower error bounds of scheme~\eqref{xn-IM}. Furthermore, we are able to prove that this lower bound is optimal by choosing $x^* \in \Omega$ and two non-expansive mappings $T_1,T_2$ given by 
$$T_1(x)=-\alpha_1x+(1+\alpha_1)x^*, \quad T_2(x)=-\alpha_2x + (1+\alpha_2)x^*, \quad x\in \Omega.$$
Finally, to show \eqref{L-IM-B1}, let us first apply \eqref{ln-ineq} to arrive at
\begin{align}\label{L-IM-B3}
 -\displaystyle\sum_{k=0}^{n}\left(\dfrac{\sigma_1 a_k}{1-\sigma_1 a_k}+\dfrac{\sigma_2 b_k}{1-\sigma_2 b_k}\right)\le \log \mathcal{L}^{\mathrm{IM}}_n \le - \displaystyle\sum_{k=0}^{n}\left(\sigma_1 a_k+\sigma_2 b_k\right),
\end{align}
where $\sigma_1=\alpha_1+1$ and $\sigma_2=\alpha_2+1$. Thanks to Lemma \ref{lem:seri} once again with noticing that $\sigma_1, \sigma_2 \ge 1$, one has
\begin{align*}
 \sum_{k=0}^{\infty}\left(\dfrac{\sigma_1 a_k}{1-\sigma_1 a_k}+\dfrac{\sigma_2 b_k}{1-\sigma_2 b_k}\right) \sim \sum_{k=0}^{\infty}\left(\sigma_1 a_k + \sigma_2 b_k\right) \sim \sum_{k=0}^{\infty}\left( a_k+ b_k\right).
\end{align*}
This allows us to conclude \eqref{L-IM-B1} from \eqref{L-IM-B3} and complete the proof.
\end{proof}

\section{Convergence theorems}
\label{sec:main}

This section aims to state and prove convergence theorems concerning the optimal error bounds. By controlling the sequence of OEBs, we derive some necessary and sufficient conditions for the convergence of the iterative process. This makes it very interesting to study the OEB sequence in various iterative procedures. Although in this paper, we only illustrate the study of OEBs for Ishikawa and the modified Ishikawa process, it is worth noting that our novelty with OEBs can be applied to various iterative processes for approximating common fixed points of non-expansive mappings. 

\subsection{Convergence rate estimates from OEBs}

The following theorem finds two interesting assumptions that are sufficient to obtain the convergence of Ishikawa iterative process~\eqref{xn-I}. The result can be simply obtained from Theorem \ref{theo-IA} by sending the OUEB sequence to zero. Further, we also give a necessary and sufficient condition for the convergence by controlling both optimal lower and upper error bounds. Also, from the viewpoint of numerical analysis, determining the convergence rate plays a central role in iterative methods. We shall present the convergence analysis for two iterative schemes of Ishikawa type via the sequence of OEBs.

\begin{theorem}\label{coro-I}
Let $\Omega$ be a convex subset in a Banach space $\mathbb{X}$ and $(a_n)_{n \in \mathbb{N}}$, $(b_n)_{n \in \mathbb{N}}$ be two sequences in the interval $[0,1]$. Assume further that $T_1 \in \mathbf{Ne}_{\alpha_1}(\Omega)$, $T_2 \in \mathbf{Ne}_{\alpha_2}(\Omega)$ have a common fixed point $x^* \in \mathcal{F}(T_1, T_2)$ for some $\alpha_1, \alpha_2 \in [0,1]$ satisfying $0<\alpha_1+\alpha_2<2$. Let $(x_n)_{n \in \mathbb{N}}$ be an iterative sequence defined by~\eqref{xn-I}. Then, $(x_n)$ converges to $x^*$ as $n$ tends to infinity if one of the two following assumptions is satisfied:
\begin{enumerate}
\item [(i)] $\alpha_1 \in [0,1]$,  $\alpha_2<1$ and $\displaystyle{\sum_{k=0}^{\infty} b_k = +\infty}$;
\item [(ii)] $\alpha_1 \in [0,1)$, $\alpha_2=1$ and $\displaystyle{\sum_{k=0}^{\infty} a_kb_k = +\infty}$. 
\end{enumerate}
In particular, if $\alpha_2<1$ and \eqref{A-theo-IB} holds, then one has
\begin{align}\label{R-coro-I}
\lim_{n \to \infty} x_n = x^* \Longleftrightarrow \displaystyle{\sum_{k=0}^{\infty} b_k = +\infty}.
\end{align}
Moreover, when $\alpha_2<1$ and $\displaystyle\sum_{k=0}^{\infty} b_k = +\infty$, we then have 
\begin{align}\label{Rate-I}
 \log \mathcal{R}^{-1}(x_{n+1},x_0,x^*) \approx \sum_{k=0}^n b_k,
\end{align}
if there exists $\varepsilon \in (0,1)$ such that
\begin{align}\label{cond-Ra-1}
b_k \le \frac{1-\varepsilon}{1+\alpha_2(1-a_k+\alpha_1 a_k)}, \ \mbox{ for every } k \in \mathbb{N}.
\end{align}
\end{theorem}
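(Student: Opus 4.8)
The plan is to deduce all three assertions from the optimal upper and lower error bounds already furnished by Theorems~\ref{theo-IA} and~\ref{theo-IB}, without revisiting the mappings. Throughout I would assume $x_0\neq x^*$, since otherwise $x_n\equiv x^*$ and the statements involving $\mathcal{R}$ and~\eqref{R-coro-I} are vacuous. For the two sufficient conditions, since $(\mathcal{U}_n^{\mathrm{I}})$ is an upper error bound, $\|x_{n+1}-x^*\|\le\mathcal{U}_n^{\mathrm{I}}\|x_0-x^*\|$ for the given pair $T_1,T_2$, so it is enough that $\mathcal{U}_n^{\mathrm{I}}\to 0$: in case~(i) this follows from $\sum_k b_k=+\infty$ via~\eqref{U-IA-1st} (using $\alpha_2<1$), and in case~(ii) from $\sum_k a_kb_k=+\infty$ via~\eqref{U-IA-2sd} (using $\alpha_2=1$, $\alpha_1<1$). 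For the equivalence~\eqref{R-coro-I}, ``$\Leftarrow$'' is exactly case~(i), while for ``$\Rightarrow$'' I would use that under~\eqref{A-theo-IB} Theorem~\ref{theo-IB} gives $\|x_{n+1}-x^*\|\ge\mathcal{L}_n^{\mathrm{I}}\|x_0-x^*\|$; dividing by $\|x_0-x^*\|>0$ and letting $n\to\infty$ forces $\mathcal{L}_n^{\mathrm{I}}\to 0$, hence $\sum_k b_k=+\infty$ by~\eqref{L-IB-1}.

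The rate~\eqref{Rate-I} is the substantive part. Setting $A_k:=1+\alpha_2(1-a_k+\alpha_1a_k)\in[1,2]$, I would first note that~\eqref{cond-Ra-1} says $b_kA_k\le 1-\varepsilon$, so $1-b_kA_k\ge\varepsilon$ and $b_k\le 1-\varepsilon$; in particular~\eqref{A-theo-IB} holds (with a uniform gap), Theorem~\ref{theo-IB} applies, each factor of $\mathcal{U}_n^{\mathrm{I}}$ and of $\mathcal{L}_n^{\mathrm{I}}$ lies in $[\varepsilon,1]$ (the OUEB factor is $\ge 1-b_k\ge\varepsilon$), and both products are strictly positive, so in particular $x_{n+1}\neq x^*$. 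Combining the upper and lower error bounds and dividing by $\|x_0-x^*\|>0$ gives $\mathcal{L}_n^{\mathrm{I}}\le\mathcal{R}(x_{n+1},x_0,x^*)\le\mathcal{U}_n^{\mathrm{I}}$, hence
\[
-\log\mathcal{U}_n^{\mathrm{I}}\;\le\;\log\mathcal{R}^{-1}(x_{n+1},x_0,x^*)\;\le\;-\log\mathcal{L}_n^{\mathrm{I}}.
\]
For the left-hand side, applying $\log x\le x-1$ termwise (as in~\eqref{est-U-IA}) gives $-\log\mathcal{U}_n^{\mathrm{I}}\ge(1-\alpha_2)\sum_{k=0}^n b_k$, which is where $\alpha_2<1$ is used. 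For the right-hand side, applying $\log x\ge(x-1)/x$ termwise and then the uniform gap $1-b_kA_k\ge\varepsilon$ together with $A_k\le 2$ gives $-\log\mathcal{L}_n^{\mathrm{I}}\le\sum_{k=0}^n\frac{b_kA_k}{1-b_kA_k}\le\frac{2}{\varepsilon}\sum_{k=0}^n b_k$ (one may instead cite~\eqref{est-L-IB-01} and bound its left sum using $1-b_kA_k\ge\varepsilon$). The two inequalities together are exactly~\eqref{Rate-I} in the sense of~\eqref{def-approx}.

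The only genuinely delicate step is the upper bound on $-\log\mathcal{L}_n^{\mathrm{I}}$: without~\eqref{cond-Ra-1} the factor $1-b_kA_k$ may approach $0$, so $-\log(1-b_kA_k)$ blows up relative to $b_k$ and the two-sided equivalence fails; hypothesis~\eqref{cond-Ra-1} is precisely the uniform strengthening of~\eqref{A-theo-IB} needed to keep $\sum_k\frac{b_k}{1-b_kA_k}$ comparable to $\sum_k b_k$. Everything else reduces to the elementary inequality~\eqref{ln-ineq} and Lemma~\ref{lem:seri} already exploited in Section~\ref{sec:OEB_Ishikawa}.
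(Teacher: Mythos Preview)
Your proposal is correct and follows essentially the same route as the paper's proof: both reduce everything to the OUEB/OLEB sandwiches of Theorems~\ref{theo-IA} and~\ref{theo-IB}, invoke~\eqref{U-IA-0} and~\eqref{L-IB-1} for the convergence statements, and then take logarithms and use~\eqref{ln-ineq} termwise for the rate. The only cosmetic difference is that the paper bounds $A_k\le 1+\alpha_2$ to obtain the slightly sharper constant $(1+\alpha_2)/\varepsilon$ on the upper side (cf.~\eqref{est-L-IB-01}), whereas you use $A_k\le 2$ to get $2/\varepsilon$; this does not affect the $\approx$ conclusion, and your explicit handling of the case $x_0=x^*$ and of the positivity of the product factors is, if anything, a bit more careful than the paper's.
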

\begin{proof}
If one of the two following assumptions in $(i)$ and $(ii)$ satisfies, then Theorem~\ref{theo-IA} gives us the limit of the OUEB sequence $(\mathcal{U}_n^{\mathrm{I}})_{n\in\mathbb{N}}$ defined by \eqref{U-IA} is zero, i.e. $\displaystyle{\lim_{n \to \infty}}  \mathcal{U}_n^{\mathrm{I}}=0$ and hence, we arrive at $\displaystyle{\lim_{n \to \infty}}  x_n = x^*$. Next, if we assume that $\alpha_2<1$ and the assumption~\eqref{A-theo-IB} given, then the following statement holds
\begin{align}\notag
\sum_{k=0}^{\infty} b_k = +\infty \Longleftrightarrow \lim_{n \to \infty}\mathcal{L}_n^{\mathrm{I}} = \lim_{n \to \infty}\mathcal{U}_n^{\mathrm{I}} = 0,
\end{align}
by combining Theorem \ref{theo-IA} and Theorem \ref{theo-IB}. And we finally obtain the desired result in \eqref{R-coro-I}. Moreover, Theorem \ref{theo-IA} and \ref{theo-IB} also give us the following estimate
\begin{align}\label{Ra-LU}
\mathcal{L}^{\mathrm{I}}_n \le \mathcal{R}(x_{n+1},x_0,x^*) \le \mathcal{U}^{\mathrm{I}}_n, \ \mbox{ for every } n \in \mathbb{N},
\end{align}
where $(\mathcal{U}^{\mathrm{I}}_n)$ and $(\mathcal{L}^{\mathrm{I}}_n)$ are OUEB and OLEB sequence given as in~\eqref{U-IA} and \eqref{L-IB}, respectively. Here, it is worth noting that \eqref{A-theo-IB} is valid and $\mathcal{L}^{\mathrm{I}}_n$ is well-defined if there exists a real number $\delta \in \left(0,\frac{1}{1+\alpha_2}\right)$ such that $(b_n) \subset [0,\delta]$ or there exists $\varepsilon \in (0,1)$ satisfying \eqref{cond-Ra-1}.  Combining \eqref{Ra-LU} with two estimates in \eqref{est-U-IA} and \eqref{est-L-IB-01}, one obtains that
\begin{align}\label{Ra-LU-2}
(1-\alpha_2)\sum_{k=0}^n b_k \le \log \mathcal{R}^{-1}(x_{n+1},x_0,x^*) \le (1+\alpha_2) \sum_{k=0}^n \frac{b_k}{1-\big(1+\alpha_2 (1-a_k+\alpha_1a_k)\big)b_k}.
\end{align}
It is clear to see that $\alpha_1 \le 1-a_k+\alpha_1a_k \le 1$ and therefore, if \eqref{cond-Ra-1} satisfies for some $\varepsilon \in (0,1)$, then we also obtain that
\begin{align}\notag %\label{Rate-I}
(1-\alpha_2)\sum_{k=0}^n b_k \le \log \mathcal{R}^{-1}(x_{n+1},x_0,x^*) \le \frac{1+\alpha_2}{\varepsilon} \sum_{k=0}^n b_k.
\end{align}
The proof of \eqref{Rate-I} is complete from the definition of two equivalent sequences in \eqref{def-approx}.
\end{proof}

\begin{remark}\label{Rmk-theo-1}
The conclusion of \eqref{Rate-I} still holds if we replace \eqref{cond-Ra-1} by the following assumption
\begin{align*}
(b_n) \subset [0,\delta], \mbox{ for some } \delta \in \left(0,\frac{1}{1+\alpha_2}\right)
\end{align*}
Indeed, if $b_k \le \delta$ for some $\delta \in \left(0,\frac{1}{1+\alpha_2}\right)$, then \eqref{Ra-LU-2} implies that
\begin{align}\notag %\label{Rate-I}
(1-\alpha_2)\sum_{k=0}^n b_k \le \log \mathcal{R}^{-1}(x_{n+1},x_0,x^*) \le \frac{1+\alpha_2}{1-(1+\alpha_2)\delta} \sum_{k=0}^n b_k,
\end{align}
which allows us to conclude \eqref{Rate-I}. 
\end{remark}

From two previous theorems: Theorem \ref{theo-IM-A} and Theorem \ref{theo-IM-B}, we directly obtain the following theorem that concerning the convergence analysis of the modified Ishikawa iterative process~\eqref{xn-IM}.

\begin{theorem}\label{coro-IM}
Let $\Omega$ be a convex subset in a Banach space $\mathbb{X}$ and $(a_n)_{n \in \mathbb{N}}$, $(b_n)_{n \in \mathbb{N}}$ be two sequences in the interval $[0,1]$. Assume that $T_1 \in \mathbf{Ne}_{\alpha_1}(\Omega)$, $T_2 \in \mathbf{Ne}_{\alpha_2}(\Omega)$ have a common fixed point $x^* \in \mathcal{F}(T_1, T_2)$ for some $\alpha_1, \alpha_2 \in (0,1)$. Further, given $(x_n)$ be an iterative sequence defined by~\eqref{xn-IM}. Then, there holds
\begin{align}\label{coro-IM-1}
\sum_{k=0}^{\infty}(a_k+b_k)=+\infty \Longrightarrow \lim_{n \to \infty} x_n = x^*.
\end{align}
In particular, under an additional assumption ~\eqref{A-IM-B}, the following statement holds true 
\begin{align}\label{coro-IM-2}
\sum_{k=0}^{\infty}(a_k+b_k)=+\infty \Longleftrightarrow \lim_{n \to \infty} x_n = x^*.
\end{align}
Moreover, if there exists $\varepsilon \in (0,1)$ such that 
\begin{align}\label{cond-Ra-IM}
(a_n) \subset \left[0,\frac{1-\varepsilon}{1+\alpha_1}\right], \mbox{ and } \ (b_n) \subset \left[0,\frac{1-\varepsilon}{1+\alpha_2}\right], 
\end{align}
then we have 
\begin{align}\label{Rate-IM}
 \log \mathcal{R}^{-1}(x_{n+1},x_0,x^*) \approx \sum_{k=0}^n (a_k+b_k).
\end{align}
\end{theorem}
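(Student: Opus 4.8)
The plan is to derive everything from the two preceding results, Theorem~\ref{theo-IM-A} and Theorem~\ref{theo-IM-B}, reusing not only their statements but also the intermediate two-sided logarithmic estimates recorded in their proofs. First I would dispose of \eqref{coro-IM-1}: since $\alpha_1,\alpha_2\in(0,1)$, both $1-\alpha_1$ and $1-\alpha_2$ are strictly positive, and as $a_k,b_k\ge 0$ one has $(1-\alpha_1)\sum_k a_k+(1-\alpha_2)\sum_k b_k=+\infty$ if and only if $\sum_k a_k=+\infty$ or $\sum_k b_k=+\infty$, i.e.\ if and only if $\sum_k(a_k+b_k)=+\infty$. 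Hence Theorem~\ref{theo-IM-A} gives $\mathcal{U}_n^{\mathrm{IM}}\to 0$, and the upper error bound property $\|x_{n+1}-x^*\|\le \mathcal{U}_n^{\mathrm{IM}}\|x_0-x^*\|$ forces $x_n\to x^*$.

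Next I would prove \eqref{coro-IM-2}. The forward implication is exactly \eqref{coro-IM-1}. For the converse, assume in addition \eqref{A-IM-B} so that, by Theorem~\ref{theo-IM-B}, the sequence $\mathcal{L}_n^{\mathrm{IM}}$ of \eqref{L-IM-B} is a (well-defined, positive) lower error bound; thus $\mathcal{L}_n^{\mathrm{IM}}\|x_0-x^*\|\le\|x_{n+1}-x^*\|$ for all $n$. If $x_n\to x^*$ and $x_0\neq x^*$ (the case $x_0=x^*$ being degenerate, with $x_n\equiv x^*$), then dividing by $\|x_0-x^*\|>0$ yields $\mathcal{L}_n^{\mathrm{IM}}\to 0$, and Theorem~\ref{theo-IM-B} gives $\sum_k(a_k+b_k)=+\infty$.

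For the rate \eqref{Rate-IM}, I would first note that \eqref{cond-Ra-IM} implies $(1+\alpha_1)a_k\le 1-\varepsilon<1$ and $(1+\alpha_2)b_k\le 1-\varepsilon<1$, so \eqref{A-IM-B} holds and both $\mathcal{U}_n^{\mathrm{IM}},\mathcal{L}_n^{\mathrm{IM}}$ are positive. Combining the optimal upper and lower bounds of Theorems~\ref{theo-IM-A}–\ref{theo-IM-B} gives, for $x_0\neq x^*$,
\[
\mathcal{L}_n^{\mathrm{IM}}\le \mathcal{R}(x_{n+1},x_0,x^*)\le \mathcal{U}_n^{\mathrm{IM}},
\qquad\text{hence}\qquad
-\log \mathcal{U}_n^{\mathrm{IM}}\le \log \mathcal{R}^{-1}(x_{n+1},x_0,x^*)\le -\log \mathcal{L}_n^{\mathrm{IM}}.
\]
From \eqref{est-U-IM-A-02} I get $-\log\mathcal{U}_n^{\mathrm{IM}}\ge (1-\alpha_1)\sum_{k=0}^n a_k+(1-\alpha_2)\sum_{k=0}^n b_k\ge \min(1-\alpha_1,1-\alpha_2)\sum_{k=0}^n(a_k+b_k)$; from \eqref{L-IM-B3}, writing $\sigma_i=1+\alpha_i$ and using $1-\sigma_1 a_k\ge\varepsilon$, $1-\sigma_2 b_k\ge\varepsilon$, I get $-\log\mathcal{L}_n^{\mathrm{IM}}\le \sum_{k=0}^n\bigl(\tfrac{\sigma_1 a_k}{1-\sigma_1 a_k}+\tfrac{\sigma_2 b_k}{1-\sigma_2 b_k}\bigr)\le \tfrac1\varepsilon\sum_{k=0}^n(\sigma_1 a_k+\sigma_2 b_k)\le \tfrac2\varepsilon\sum_{k=0}^n(a_k+b_k)$. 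Chaining these four inequalities produces constants $c_1=\min(1-\alpha_1,1-\alpha_2)>0$ and $c_2=2/\varepsilon$ with $c_1\sum_{k=0}^n(a_k+b_k)\le \log\mathcal{R}^{-1}(x_{n+1},x_0,x^*)\le c_2\sum_{k=0}^n(a_k+b_k)$, which is precisely \eqref{Rate-IM} by the definition \eqref{def-approx} of $\approx$.

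The argument is essentially bookkeeping, so there is no single hard step; the two points that require a little care are (a) the degenerate case $x_0=x^*$ in the converse of \eqref{coro-IM-2}, which must be excluded or absorbed into the convention for $\mathcal{R}$, and (b) the realization that the rate estimate cannot be read off from the ``iff'' statements alone — it genuinely needs the intermediate logarithmic inequalities \eqref{est-U-IM-A-02} and \eqref{L-IM-B3} from the proofs of the earlier theorems, where Lemma~\ref{lem:seri} has already done the work of replacing $\tfrac{a_k}{1-\sigma_1 a_k}$-type terms by $a_k$.
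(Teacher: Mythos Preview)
Your proposal is correct and follows essentially the same approach as the paper: both derive \eqref{coro-IM-1}--\eqref{coro-IM-2} directly from Theorems~\ref{theo-IM-A} and~\ref{theo-IM-B}, and both obtain the rate \eqref{Rate-IM} by sandwiching $\mathcal{R}(x_{n+1},x_0,x^*)$ between $\mathcal{L}_n^{\mathrm{IM}}$ and $\mathcal{U}_n^{\mathrm{IM}}$ and then invoking the intermediate logarithmic estimates \eqref{est-U-IM-A-02} and \eqref{L-IM-B3}. Your upper constant $c_2=2/\varepsilon$ is slightly weaker than the paper's $\tfrac{1}{\varepsilon}\max\{1+\alpha_1,1+\alpha_2\}$, but this is immaterial for the $\approx$ conclusion; your treatment of the degenerate case $x_0=x^*$ and the observation that \eqref{cond-Ra-IM} forces \eqref{A-IM-B} are in fact more explicit than the paper's.
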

\begin{proof}
Applying a similar argument as in the previous proofs of Theorem \ref{coro-I}, we are able to conclude \eqref{coro-IM-1} and \eqref{coro-IM-2}. 
Moreover, it also gives us 
\begin{align}\label{Ra-LU-IM}
\mathcal{L}^{\mathrm{IM}}_n \le \mathcal{R}(x_{n+1},x_0,x^*) \le \mathcal{U}^{\mathrm{IM}}_n, \ \mbox{ for every } n \in\mathbb{N},
\end{align}
where $(\mathcal{U}^{\mathrm{IM}}_n)$ and $(\mathcal{L}^{\mathrm{IM}}_n)$ are the sequence of optimal upper and lower error bounds in~\eqref{U-IM-A} and \eqref{L-IM-B}, respectively. Combining \eqref{Ra-LU-IM} with \eqref{est-U-IM-A-02} and \eqref{L-IM-B3}, one gets that
\begin{align}\label{Ra-LU-IM-2}
\min\{1-\alpha_1;1-\alpha_2\}\sum_{k=0}^n (a_k+b_k) \le \log \mathcal{R}^{-1}(x_{n+1},x_0,x^*),
\end{align}
and 
\begin{align}\notag
\log \mathcal{R}^{-1}(x_{n+1},x_0,x^*) \le \sum_{k=0}^{n}\left[\frac{(1+\alpha_1) a_k}{1-(1+\alpha_1) a_k}+\frac{(1+\alpha_2) b_k}{1-(1+\alpha_2) b_k}\right].
\end{align}
Taking \eqref{cond-Ra-IM} into account, we arrive at
\begin{align}\label{Ra-LU-IM-3}
\log \mathcal{R}^{-1}(x_{n+1},x_0,x^*) \le \frac{1}{\varepsilon} \max\{1+\alpha_1;1+\alpha_2\} \sum_{k=0}^{n} (a_k+b_k).
\end{align}
At this stage, it concludes \eqref{Rate-IM} by making use of \eqref{Ra-LU-IM-2} and \eqref{Ra-LU-IM-3}.
\end{proof}

\subsection{Comparison strategy with OEBs}

Building on the idea of the OEB sequence, we shall derive a comparison on convergence rate between two iterative processes mentioned in previous sections.

\begin{theorem}\label{theo:I-IM}
Let $\Omega$ be a convex subset in a Banach space $\mathbb{X}$ and two sequences $(a_n)_{n\in\mathbb{N}}$ and $(b_n)_{n\in\mathbb{N}}$ in $[0,1]$.  Assume that $T_1 \in \mathbf{Ne}_{\alpha_1}(\Omega)$, $T_2 \in \mathbf{Ne}_{\alpha_2}(\Omega)$ have a common fixed point $x^* \in \mathcal{F}(T_1, T_2)$ for some $\alpha_1,\alpha_2\in (0,1)$. Further, given $(x^{\mathrm{I}}_n)$ and $(x^{\mathrm{IM}}_n)$ be two iterative sequences defined by~\eqref{xn-I} and~\eqref{xn-IM} respectively, with the same initial point $x_0 \in \Omega$. Then, the following statement holds true 
\begin{align}\label{cond:I-IM-b}
\lim_{n \to \infty} x^{\mathrm{I}}_n=\lim_{n \to \infty} x^{\mathrm{IM}}_n=x^* \Longleftrightarrow \sum_{k=0}^{\infty}b_k=+\infty.
\end{align}
Moreover, the sequence $(x^{\mathrm{IM}}_n)$ converges to $x^*$ faster than $(x^{\mathrm{I}}_n)$ under following additional conditions 
\begin{align}\label{cond:I-IM-a}
\sum_{k=0}^{\infty}b_k =+\infty, \mbox{ and } b_k \le \frac{(1-\alpha_1)a_k}{1+\alpha_2(1-a_k+\alpha_1 a_k)} \mbox{ for every } k \in \mathbb{N}.
\end{align}
\end{theorem}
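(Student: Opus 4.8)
The plan is to squeeze the two iterates between the optimal error bounds already produced in Theorems~\ref{theo-IA}--\ref{theo-IM-B} and then reduce everything to the behaviour of the single product $\prod_k\big(1-(1-\alpha_2)b_k\big)$.

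For the equivalence \eqref{cond:I-IM-b} I would first handle ``$\Leftarrow$''. Assuming $\sum_k b_k=+\infty$ and recalling $\alpha_1,\alpha_2\in(0,1)$, Theorem~\ref{coro-I}(i) gives $x^{\mathrm{I}}_n\to x^*$ at once; since $\sum_k b_k=+\infty$ forces $\sum_k(a_k+b_k)=+\infty$, the implication \eqref{coro-IM-1} of Theorem~\ref{coro-IM} then gives $x^{\mathrm{IM}}_n\to x^*$. For ``$\Rightarrow$'' it suffices to exploit $x^{\mathrm{I}}_n\to x^*$: from the lower estimate established in the proof of Theorem~\ref{theo-IB} one has $\|x^{\mathrm{I}}_{n+1}-x^*\|\ge\big(1-b_n-\alpha_2b_n(1-a_n+\alpha_1a_n)\big)\|x^{\mathrm{I}}_n-x^*\|$ for every $n$. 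If $\sum_k b_k$ were finite, then $b_k\to0$, so \eqref{A-theo-IB} would be valid for all $k\ge N$ with $N$ large, and the tail product $\prod_{k\ge N}\big(1-b_k-\alpha_2b_k(1-a_n+\alpha_1a_n)\big)$ would converge to a strictly positive number (its defect series is dominated by $(1+\alpha_2)\sum_k b_k<\infty$); this would keep $\|x^{\mathrm{I}}_n-x^*\|$ bounded away from $0$ and contradict $x^{\mathrm{I}}_n\to x^*$. Hence $\sum_k b_k=+\infty$.

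For the rate comparison, assume \eqref{cond:I-IM-a} and abbreviate $\theta_k:=1-a_k+\alpha_1a_k=1-(1-\alpha_1)a_k\in[\alpha_1,1]$. The hypothesis $b_k(1+\alpha_2\theta_k)\le(1-\alpha_1)a_k=1-\theta_k$ immediately yields $1-b_k-\alpha_2b_k\theta_k\ge\theta_k\ge\alpha_1>0$, so \eqref{A-theo-IB} holds and the OLEB sequence $\mathcal{L}^{\mathrm{I}}_n$ of \eqref{L-IB} is well defined and strictly positive. Combining the lower bound of Theorem~\ref{theo-IB} for the Ishikawa iterates with the upper bound of Theorem~\ref{theo-IM-A} for the modified ones (and discarding the trivial case $x_0=x^*$, where both sequences are constant), I get for every $n$
\begin{align*}
\mathcal{R}(x^{\mathrm{IM}}_{n+1},x^{\mathrm{I}}_{n+1},x^*)=\frac{\|x^{\mathrm{IM}}_{n+1}-x^*\|}{\|x^{\mathrm{I}}_{n+1}-x^*\|}\le\frac{\mathcal{U}^{\mathrm{IM}}_n}{\mathcal{L}^{\mathrm{I}}_n}=\prod_{k=0}^{n}\frac{\theta_k\big(1-(1-\alpha_2)b_k\big)}{1-b_k(1+\alpha_2\theta_k)}.
\end{align*}
The crucial observation is that \eqref{cond:I-IM-a} is exactly what forces $\dfrac{\theta_k}{1-b_k(1+\alpha_2\theta_k)}\le1$ for each $k$, so the product is bounded above by $\prod_{k=0}^{n}\big(1-(1-\alpha_2)b_k\big)$; since $0<1-\alpha_2<1$ and $(b_k)\subset[0,1]$, this in turn is $\le\exp\big(-(1-\alpha_2)\sum_{k=0}^{n}b_k\big)\to0$ because $\sum_k b_k=+\infty$. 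Thus $\mathcal{R}(x^{\mathrm{IM}}_n,x^{\mathrm{I}}_n,x^*)\to0$, and, both sequences converging to $x^*$ by the first part, $(x^{\mathrm{IM}}_n)$ converges to $x^*$ faster than $(x^{\mathrm{I}}_n)$ in the sense of Definition~\ref{def-conv-rate}.

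The step I expect to need the most care is this reduction: one must notice that the awkward factor $\theta_k/(1-b_k(1+\alpha_2\theta_k))$, which in general may well exceed $1$ (the denominator can be made arbitrarily small), is actually $\le1$ under \eqref{cond:I-IM-a} — this is precisely what lets the whole comparison collapse onto the elementary estimate for $\prod_k(1-(1-\alpha_2)b_k)$. The remaining points — verifying that \eqref{cond:I-IM-a} implies \eqref{A-theo-IB}, and absorbing the degenerate initial point $x_0=x^*$ exactly as for Theorem~\ref{coro-I} — are routine.
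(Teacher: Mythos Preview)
Your proposal is correct and follows essentially the same route as the paper's proof: the equivalence \eqref{cond:I-IM-b} is deduced from Theorems~\ref{coro-I} and~\ref{coro-IM}, and the rate comparison is obtained by bounding $\mathcal{R}(x^{\mathrm{IM}}_{n+1},x^{\mathrm{I}}_{n+1},x^*)\le \mathcal{U}^{\mathrm{IM}}_n/\mathcal{L}^{\mathrm{I}}_n$, using \eqref{cond:I-IM-a} to force $\theta_k/\big(1-b_k(1+\alpha_2\theta_k)\big)\le 1$, and then reducing everything to $\prod_k\big(1-(1-\alpha_2)b_k\big)\to 0$. Your tail argument for the ``$\Rightarrow$'' direction (rather than a blanket appeal to \eqref{R-coro-I}, which formally requires \eqref{A-theo-IB}) and the direct bound $1-x\le e^{-x}$ in place of the paper's reference to the technique of Theorem~\ref{theo-IA} are minor but harmless variations on the same argument.
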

\begin{proof}
Let us prove the first statement in \eqref{cond:I-IM-b} of this theorem. Firstly, we remark that
\begin{align}\label{IMfasterI}
\displaystyle{\lim_{n \to \infty} x^{\mathrm{I}}_n=x^*} \Longrightarrow \lim_{n \to \infty} x^{\mathrm{IM}}_n=x^*.
\end{align}
Indeed, if  $\displaystyle{\lim_{n \to \infty} x^{\mathrm{I}}_n=x^*}$, then thanks to Theorem \ref{coro-I}, it obtains $\displaystyle\sum_{k=0}^{\infty}b_k=+\infty$. From Theorem \ref{theo-IM-A}, we infer that  $\displaystyle{\lim_{n \to \infty} x^{\mathrm{IM}}_n=x^*}$. On the other hand, under assumption $\displaystyle\sum_{k=0}^{\infty}b_k=+\infty$, Theorem \ref{coro-I} and Theorem \ref{coro-IM} allow us to conclude 
$$\lim_{n \to \infty} x^{\mathrm{I}}_n=\lim_{n \to \infty} x^{\mathrm{IM}}_n=x^*.$$

Now, we assume that \eqref{cond:I-IM-a} is provided. Since $\displaystyle\sum_{k=0}^{\infty}b_k=+\infty$, we are allowed to obtain that both sequences $(x^{\mathrm{I}}_n)$ and $(x^{\mathrm{IM}}_n)$ converge to $x^*$. In addition, the second condition in \eqref{cond:I-IM-a} implies that
\begin{align}\notag %\label{cond:I-IM}
b_k \le \frac{(1-\alpha_1)a_k}{1+\alpha_2(1-a_k+\alpha_1 a_k)} < \frac{1}{1+\alpha_2(1-a_k+\alpha_1 a_k)} \quad \mbox{for every } k \in \mathbb{N},
\end{align}
which ensures the existence of the sequence of optimal lower error bound $\mathcal{L}^{\mathrm{I}}_n$ for iteration \eqref{xn-I}. Moreover, we also have
\begin{align}\label{est-24}
\mathcal{R}\big(x^{\mathrm{IM}}_n,x^{\mathrm{I}}_n,x^*\big) \leq \frac{\mathcal{U}^{\mathrm{IM}}_n}{\mathcal{L}^{\mathrm{I}}_n}=\displaystyle\prod_{k=0}^{n} \frac{(1-a_k + \alpha_1 a_k)(1-b_k + \alpha_2 b_k)}{1- b_k - \alpha_2b_k(1 - a_k + \alpha_1a_k)},
 \end{align}
 where the ratio $\mathcal{R}\big(x^{\mathrm{IM}}_n,x^{\mathrm{I}}_n,x^*\big)$ is defined as in \eqref{Ra}. In order to estimate the right-hand side of \eqref{est-24}, it is worth noting that \eqref{cond:I-IM-a} implies to
 \begin{align*}
 \frac{1-a_k + \alpha_1 a_k}{1- b_k - \alpha_2b_k(1 - a_k + \alpha_1a_k)} \le 1.
 \end{align*}
 Substituting this inequality into \eqref{est-24}, it follows that
 \begin{align}\notag %\label{est-24}
\mathcal{R}\big(x^{\mathrm{IM}}_n,x^{\mathrm{I}}_n,x^*\big)  \le \displaystyle\prod_{k=0}^{n} (1-b_k + \alpha_2 b_k).
 \end{align}
In a completely similar way in the proof of Theorem \ref{theo-IA}, it enables us to write
\begin{align}\notag %\label{L-B}
 \lim\limits_{n\to \infty}\displaystyle\prod_{k=0}^{n}(1-b_k+\alpha_2 b_k)=0 \Longleftrightarrow \displaystyle\sum_{k=0}^{\infty}b_k=+\infty.
\end{align}
Hence, under assumption $\displaystyle\sum_{k=0}^{\infty}b_k=+\infty$ from \eqref{cond:I-IM-a}, we may conclude that 
\begin{align*}
\lim_{n \to \infty}\mathcal{R}\big(x^{\mathrm{IM}}_n,x^{\mathrm{I}}_n,x^*\big) = 0,
\end{align*}
and this means that $x^{\mathrm{IM}}_n$ converges to $x^*$ faster than $x^{\mathrm{I}}_n$.
\end{proof}

\section{Numerical experiments}
\label{sec:tests}

In this section, some numerical examples will be carried out to validate the theoretical results in previous sections. Besides, the analysis of convergence and comparison strategy based on the idea of OEBs are also proposed to illustrate the theoretical proofs in Theorem~\ref{coro-I},~\ref{coro-IM} and Theorem~\ref{theo:I-IM}. First, let us consider $\mathbb{X} = \mathbb{R}$ and two constants $\alpha_1, \alpha_2 \in [0,1]$ satisfying $0<\alpha_1 + \alpha_2 <2$. In all the experiments, we consider the domain $\Omega:= \left[\frac{1}{4},3\right] \subset \mathbb{X}$ and two following mappings 
\begin{align}\label{T1T2}
T_1(x) = \sqrt{\alpha_1 x + 1-\alpha_1}; \quad T_2(x) = \alpha_2 \sin(x-1) + 1, \quad x \in \Omega.
\end{align}
Then, it is easy to verify that $T_1 \in \mathbf{Ne}_{\alpha_1}(\Omega)$, $T_2 \in \mathbf{Ne}_{\alpha_2}(\Omega)$ and $\mathcal{F}(T_1,T_2) = \{x^*\}$ with the common fixed point $x^*=1$. Here, we perform the numerical tests of two Ishikawa-type iterative sequences~\eqref{xn-I} and~\eqref{xn-IM-yn} that are associated with $T_1$, $T_2$ and the initial value $x_0=2$. 

Further, at iteration $n$ of each scheme, the convergence error can be defined as 
\begin{align}\label{def:err}
\mathrm{Err}_n := \mathcal{R}(x_{n},x_0,x^*), \quad n \in \mathbb{N}.
\end{align}

In each subsection as below, we restrict ourselves to the numerical tests for the Ishikawa iterative scheme~\eqref{xn-I} and the modified Ishikawa scheme~\eqref{xn-IM-yn} to a common fixed point of two mappings $T_1$, $T_2$, respectively. In the following experiments, we numerically verified the theoretical results of previous theorems based on the idea of OEBs. Numerical experiments are carried out to illustrate the relevance of theoretical analysis proved in Theorem~\ref{coro-I},~\ref{coro-IM} and~\ref{theo:I-IM}.  In addition, we also perform some tests to estimate the rate of convergence of these methods via the OEBs. 

\subsection{Ishikawa iterative process}
\label{sec:ishikawa_tests}
Let us first recall the Ishikawa scheme~\eqref{xn-I} that can be written in a two-step process
\begin{align}\label{test-I}
\begin{cases} 
x_0 \in \Omega,\\
y_n = (1-a_n)x_n + a_nT_1(x_n), \\
x_{n+1} = (1-b_n)x_n + b_nT_2(y_n), \quad n \in \mathbb{N}.\end{cases}
\end{align}

\subsubsection{Convergence results}

For the first test, we verify the theoretical convergence results of the sequence $(x_n)$ to fixed point $x^*$ in Theorem~\ref{coro-I}. It is worth mentioning that Theorem~\ref{coro-I} plays a key role in choosing prescribed parameters to obtain a good behavior of convergence of our iterative scheme.

Here, we perform several tests which correspond to both cases with $\alpha_2<1$ and $\alpha_2 = 1$. In order test the first case, let us fix 
\begin{align}\label{alpha12}
\alpha_1 = \frac{1}{2} \ \mbox{ and } \ \alpha_2=\frac{1}{5}.
\end{align}
In this case, as proved in Theorem~\ref{coro-I}$(i)$, the necessary and sufficient conditions for the sequence $(x_n)$ to converge to $x^*$ is $\sum{b_n}=+\infty$. Regarding this test, we choose $(a_n)$ as a sequence of random points selected in $[0,1]$, and our theoretical results will be illustrated with several examples of $\sum{b_n}$, where
\begin{align}\label{eq:bn} 
\begin{split}
& \text{Test}\ \ (1): \ b_n = \mathrm{rand}([0,1]) \approx 1;\\
& \text{Test}\ \ (2): \ b_n = \frac{\sqrt{n+1}+\sin(n+1)}{2\sqrt{n+1}+3} \approx \frac{1}{2}; \\
& \text{Test}\ \ (3): \ b_n = \frac{n+2}{3(n+1)\sqrt[3]{n+1} + 4} \approx \frac{1}{\sqrt[3]{n+1}}; \\
& \text{Test}\ \ (4): \ b_n = \frac{2n+3}{3(n+1)^3+1} \approx \frac{1}{(n+1)^2}, 
\end{split}
\end{align}
respectively. Here, with a slight abuse of notation, we use $\mathrm{rand}([0,1])$ stands for a random number belonging in $[0,1]$.

Figure~\ref{fig:1} shows the plot of the logarithm of error against each iteration $n$ as aforementioned in~\eqref{def:err}. The logarithmic-scale on the vertical axis and the linear scale on the horizontal axis for the maximal number of iterations is $N=50$ and $N=500$, respectively. We perform four tests corresponding to series $\sum{b_n}$ chosen in~\eqref{eq:bn} with red (with pluses), green (with circle markers), dashed blue, and dotted black lines. From both Figure~\ref{fig:1}(a) and~\ref{fig:1}(b), it can be seen that: first, the faster the convergence of series $\sum{b_n}$ is, the faster the convergence rate of our iterative process will be. The second observation is that when $\sum b_n < +\infty$ (the last case, shown in the dotted black line), the iteration will fail to converge. In other words, it does not yield convergence of the iterative sequence to a common fixed point as expected. To be more precise, we also increase the number of iterations up to $N=500$ and plot results in Figure~\ref{fig:1}(b), it also gives us a similar conclusion. This observation completely validates what we proved in the theoretical findings in Theorem~\ref{coro-I}.

\begin{figure}[H]
\centering
\subfigure[Four test models in \eqref{eq:bn} with $N=50$ iterations.]
{\includegraphics[width=0.49\linewidth]{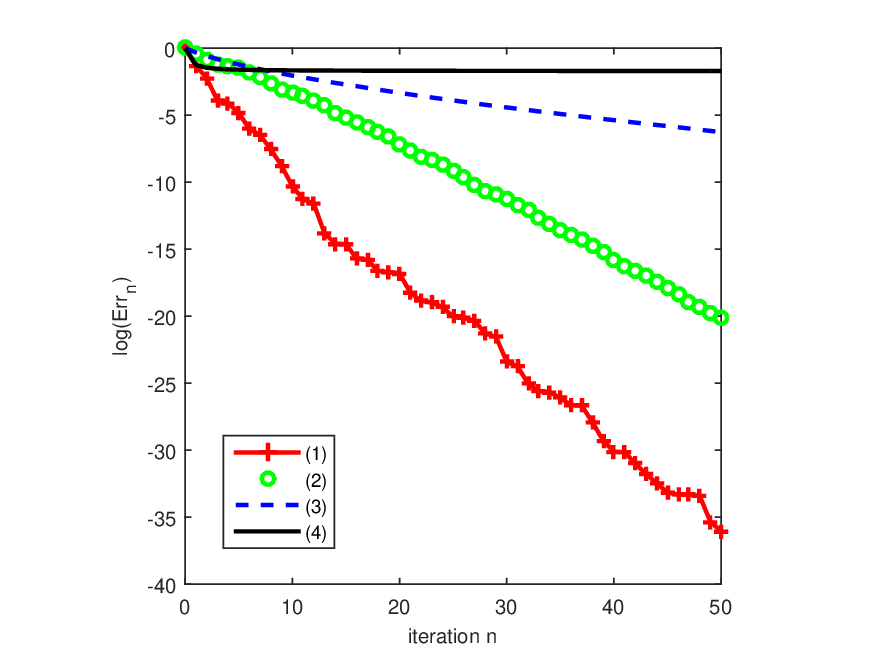}}
\subfigure[The two last test models in \eqref{eq:bn} with $N=500$ iterations.]{\includegraphics[width=0.49\linewidth]{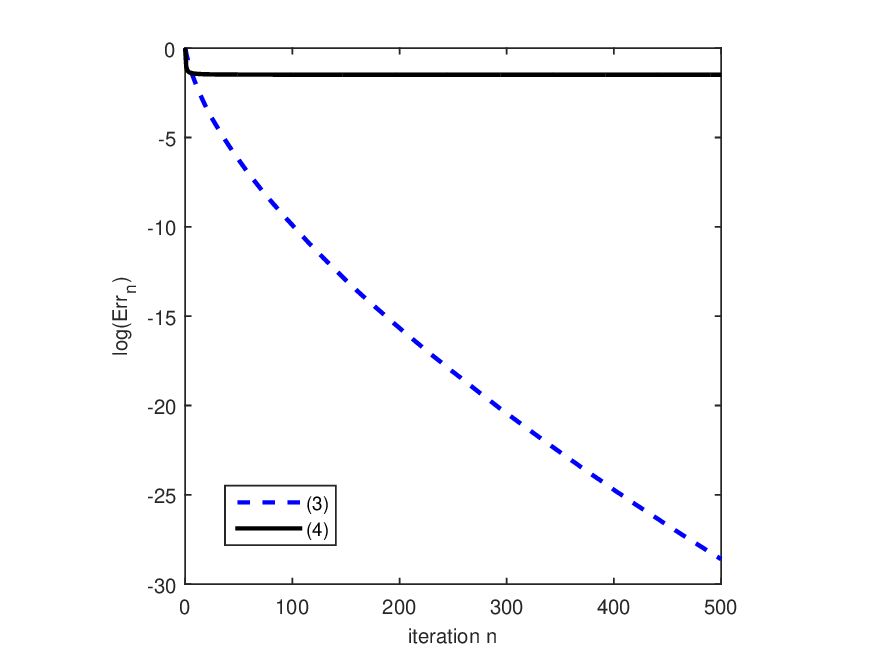}}
\caption{Errors plotted against $n$ on a log-scale with four test models of the series $\sum{b_n}$ in~\eqref{eq:bn}, where $\alpha_1 = \frac{1}{2}$ and $\alpha_2 = \frac{1}{5}$.}
\label{fig:1}
\end{figure}  

Moreover, the question may arise here: what happens if we are considering the series $\sum{a_n}<+\infty$ or $\sum{a_n}=\infty$? - Here, the advantage is that the choice of  $\sum{a_n}$ has no effect on the convergence process of Ishikawa sequence~\eqref{xn-I}. To clarify the answer, we plot the behavior of errors with two sequences of $b_n$ such that $\sum b_n = \infty$ and $\sum b_n < \infty$, respectively. For example, let us choose
\begin{align}\label{bn-fig1b}
b_n = \frac{2(n+1)^2+1}{3(n+1)^2\sqrt{n+1}+4} \approx \frac{1}{\sqrt{n+1}}; \ \mbox{ or } \  b_n = \frac{n+4}{4(n+1)^3+5} \approx \frac{1}{(n+1)^2}.
\end{align}
In particular, for each case of $(b_n)_{n \in \mathbb{N}}$ in~\eqref{bn-fig1b}, we choose three sequences $(a_n)_{n \in \mathbb{N}}$ as below
\begin{align}\label{an-fig1b}
\begin{split}
& \text{Test} \ \ (1): \ a_n = \mathrm{rand}([0,1]) \approx 1;  \\
& \text{Test} \ \ (2): \ a_n = \frac{\sin^2(n+1)}{\sqrt{(n+1)^2+10}} \approx \frac{1}{\sqrt{n+1}};   \\
& \text{Test} \ \ (3): \ a_n = \frac{2\exp\left(-\frac{1}{n}+1\right)+1}{4(n+1)^3|\sin(n+1)|+2\exp\left(-\frac{1}{n}+1\right)} \approx \frac{1}{(n+1)^3},
\end{split}
\end{align}
and respectively indicated by dashed-dotted red, dotted blue, and dashed black lines. Figure~\ref{fig:1b} confirms our comments: the property of $\sum{a_n}$ does not affect the convergence results in~$(i)$ of Theorem~\ref{coro-I}.

\begin{figure}[H]
\centering
\subfigure[With $b_n = \displaystyle{\frac{2(n+1)^2+1}{3(n+1)^2\sqrt{n+1}+4}}$.]{\includegraphics[width=0.49\linewidth]{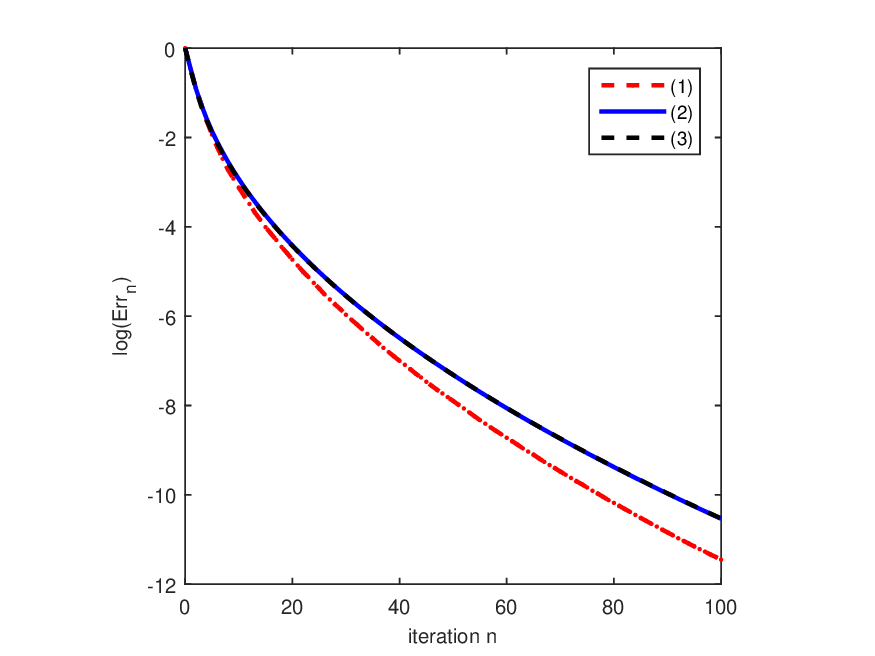}}
\subfigure[With $b_n = \displaystyle{\frac{n+4}{4(n+1)^3+5}}$.]{\includegraphics[width=0.49\linewidth]{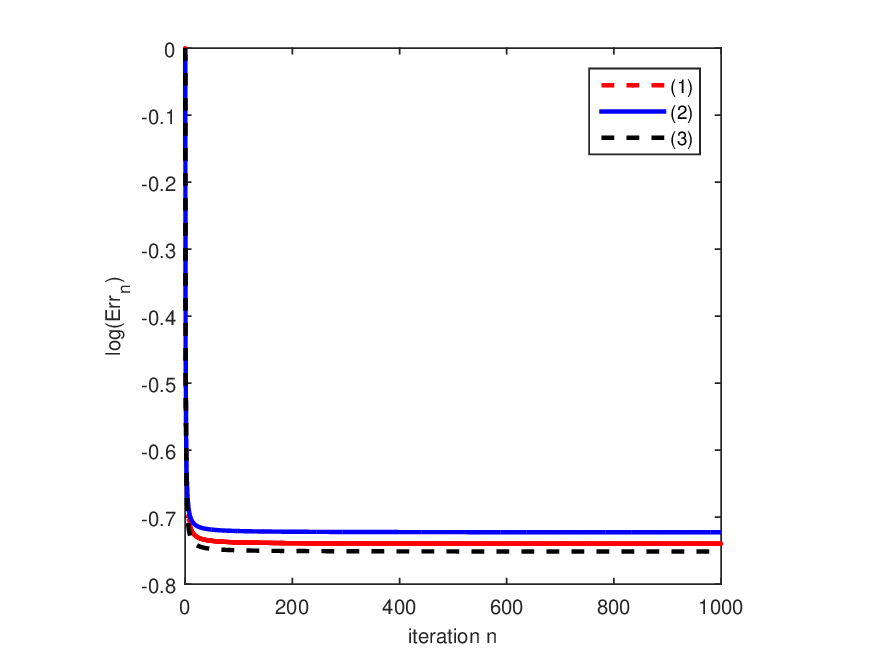}}
\caption{Errors plotted against $n$ on a log-scale with three test models of the series $\sum{a_n}$ in~\eqref{an-fig1b}, where $\alpha_1 = \frac{1}{2}$ and $\alpha_2 = \frac{1}{5}$.}
\label{fig:1b}
\end{figure}

Next, the validity of the second convergence statement in Theorem~\ref{coro-I} under assumption $(ii)$ will be performed via some numerical tests. Let us fix 
$$\alpha_1 = \frac{1}{2} \ \mbox{ and } \ \alpha_2=1.$$ 
In Figure~\ref{fig:2}(a) and~\ref{fig:2}(b), we execute four test models as in~\eqref{eq:bn} only with $a_n = \mathrm{rand}([0,1])$ the sequence chosen randomly between 0 and 1. These figures yield convergence of the sequence~\eqref{xn-I} to a common fixed point $x^*$ if $\sum a_n b_n = \infty$. Further, it notices that we arrive at the same conclusion as that of the previous examples with the statement $(i)$ of Theorem~\ref{coro-I}. 

\begin{figure}[H]
\centering
\subfigure[Four test models in \eqref{eq:bn} with $N=50$ iterations.]{\includegraphics[width=0.49\linewidth]{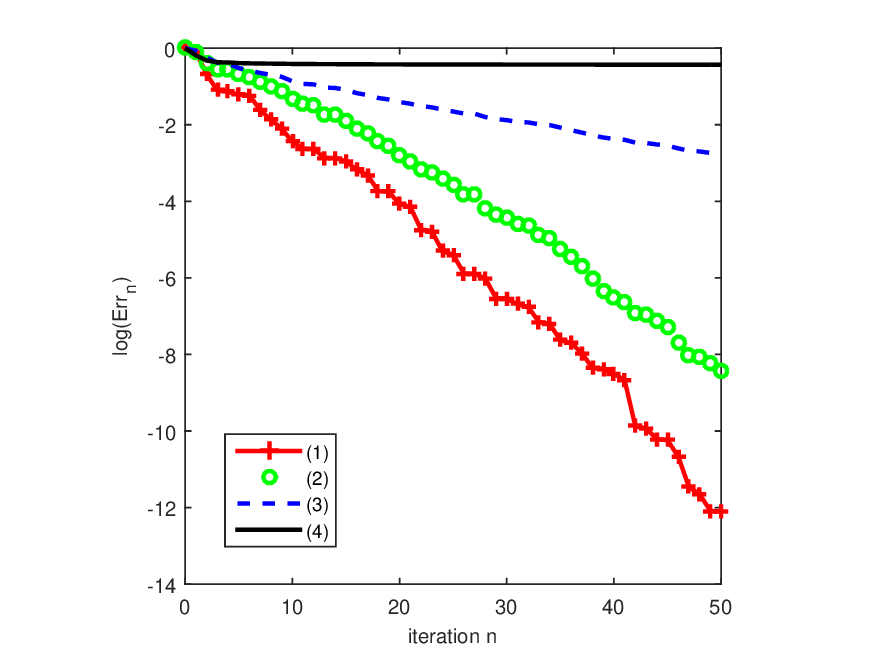}}
\subfigure[The two last test models in \eqref{eq:bn} with $N=500$ iterations.]{\includegraphics[width=0.49\linewidth]{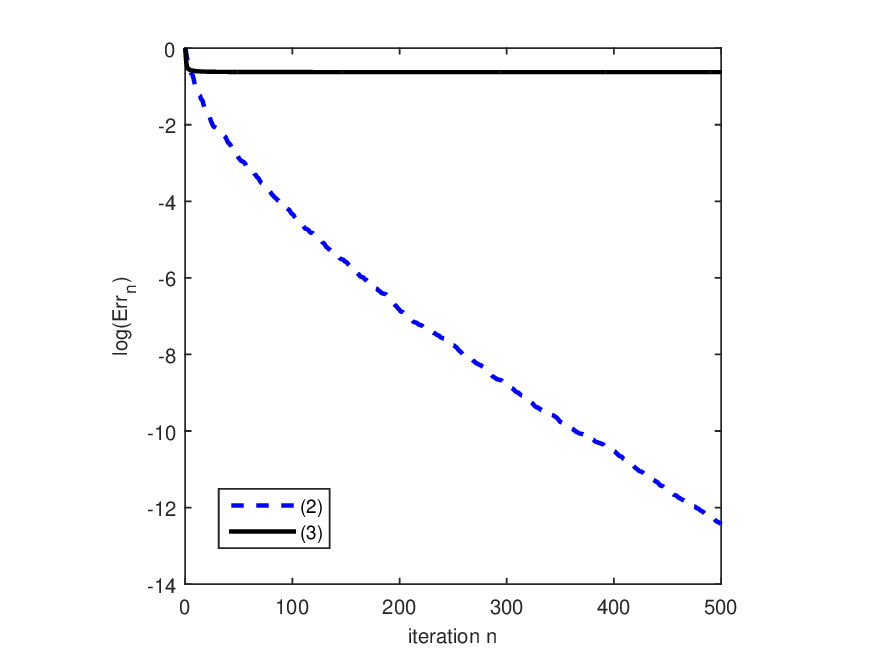}}
\caption{Errors plotted against $n$ on a log-scale with four test models of the series $\sum{b_n}$ in~\eqref{eq:bn}, where $\alpha_1 = \frac{1}{2}$ and $\alpha_2 = 1$.}
\label{fig:2}
\end{figure}

In the next test, we verify the convergence results for different choices of data, where the sum $\sum{a_nb_n}$ is finite or infinite. For example, it enables us to choose
\begin{align}\label{eq:anbn}
\begin{split}
& \text{Test} \ \ (1): \ a_n = \frac{1}{n+1},  \quad b_n = \mathrm{rand}([0,1]);  \\
& \text{Test} \ \ (2): \ a_n = \frac{1}{(n+1)^{\frac{2}{3}}},  \quad b_n = \frac{1}{(n+1)^{\frac{3}{4}}};  \\
& \text{Test} \ \ (3): \ a_n = \frac{1}{n+1},  \quad b_n = \frac{\mathrm{rand}([0,1])}{n+1};\\
& \text{Test} \ \ (4): \ a_n = \frac{1}{\sqrt{n+1}},  \quad b_n = \frac{1}{(n+1)^2},
\end{split}
\end{align}
to verify the second statement $(ii)$, respectively. 

Figure~\ref{fig:3} presents errors plotted against the number of iterations $n$ on a log-scale for these two models whenever $a_n$ and $b_n$ are chosen as above. The dashed red, dotted green, blue and black lines in the figure show the behavior of iteration errors committed by these four test cases (1), (2), (3) and (4) in~\eqref{eq:anbn}, respectively. It can be observed from the results that the iteration error decreases steadily whenever $\sum a_n b_n = \infty$ in the first case (the dashed red line), otherwise $\sum a_n b_n < \infty$ in three other cases, the resulting iterative Ishikawa method~\eqref{xn-I} may not converge. 

\begin{figure}[H]
\centering
{\includegraphics[width=0.65\linewidth]{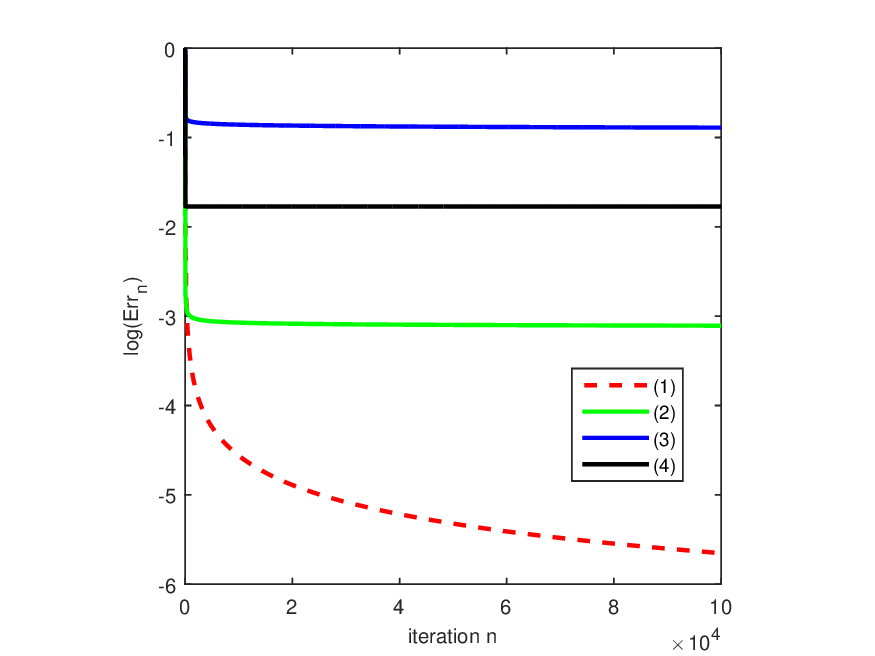}}
  \caption{Errors plotted against $n$ on a log-scale for various choices of $a_n$ and $b_n$ as in~\eqref{eq:anbn}.}
\label{fig:3}
\end{figure}

At last, the reader may wish to look at another important application of OEBs in error and convergence rate estimations. As pointed out in the theoretical proof of~\eqref{Rate-I} in Theorem~\ref{coro-I}, some numerical examples will be presented and analyzed in the next specified section. 

\subsubsection{Estimate of convergence rate}

At the heart of our convergence analysis is the rate of convergence that provides significant insights into numerical schemes. This section provides a novel analysis of Ishikawa iteration process~\eqref{xn-I} based on the idea of OEBs, as stated and proved in Theorem~\ref{coro-I}.

Going back to the test with Ishikawa scheme~\eqref{test-I} and two mappings $T_1, T_2$ as in~\eqref{T1T2}. Under the assumptions of Theorem~\ref{coro-I} and Remark~\ref{Rmk-theo-1}, we further consider $\delta$ and $\varepsilon$ are two positive constants such that
\begin{align*}
b_k \le \delta < \frac{1}{\alpha_2+1}; \quad 1 - b_k - \alpha_2 b_k (1-a_k+\alpha_1 a_k) \ge \varepsilon,
\end{align*}
for every $k \in \mathbb{N}$. More precisely, in our numerical test, we will set
\begin{align*}
\delta:= \max_{k \ge 0} b_k, \quad \varepsilon := \min_{k \ge 0} \big[1 - b_k - \alpha_2 b_k (1-a_k+\alpha_1 a_k)\big],
\end{align*}
and also introduce two prescribed parameters $\beta_{\min}$ and $\beta_{\max}$ as follows
\begin{align*}
\beta_{\min}: = 1 - \alpha_2, \quad  \beta_{\max} : = \min \left\{\frac{1+\alpha_2}{\varepsilon}; \frac{1+\alpha_2}{1-(1+\alpha_2)\delta}\right\}.
\end{align*}
As shown  in Theorem \ref{coro-I}, it allows us to conclude the estimate
\begin{align*}
\beta_{\min} \sum_{k=0}^n  b_k \le -\log(\mathrm{Err}_{n+1}) \le \beta_{\max} \sum_{k=0}^n  b_k, \quad \mbox{ for every } n \in \mathbb{N},
\end{align*}
which can be rewritten as
\begin{align}\label{eq:logerr}
\beta_{\min} \le \sigma_n := \frac{-\log(\mathrm{Err}_{n+1})}{\displaystyle\sum_{k=0}^n  b_k} \le \beta_{\max}, \quad \mbox{ for every } n \in \mathbb{N}.
\end{align}

Here, we employ the logarithmic scale to make presenting and estimating the convergence rate for error simpler. From~\eqref{eq:logerr}, to estimate the behavior of the ratio $\sigma_n$ as $n$ increases, we shall plot the error as a function of $\displaystyle\sum_{k=0}^n{b_k}$ on a log-scale. In addition, to verify the estimation in~\eqref{eq:logerr}, it is natural to include also two function plots of $Y = \beta_{\min}X$ (dotted blue lines) and $Y = \beta_{\max} X$ (black pluses lines), respectively. The idea behind using these plots is to control the convergence rate of the iterative process.
Here, we perform several numerical experiments to support theoretical results and to explain how the convergence rate can be estimated. In Figure~\ref{fig:5}, a log-scale plot of the convergence rates for different choices of two series $\sum{a_n}$ and $\sum{b_n}$ will be shown, where
\begin{align}\label{eq:n_a}
a_n = \frac{n+3}{2n+3}, \quad  b_n = \frac{1}{5},
\end{align}
and 
\begin{align}\label{eq:n_b}
a_n = \frac{2(n+1)^2+\cos(n+1)}{3(n+1)^2+2}, \quad  b_n = \frac{2(n+1)^2+1}{4(n+1)^3+\sin (n+1)},
\end{align}
respectively. Notice that in the plots of these two simulations, the observed convergence rates (dashed red lines) are always bounded between blue and black lines. These observation examples verified the theoretical assessment in Theorem~\ref{coro-I} empirically. 

\begin{figure}[H]
\centering
\subfigure[Case 1: $a_n$ and $b_n$ are chosen as in~\eqref{eq:n_a}.]{\includegraphics[width=0.49\linewidth]{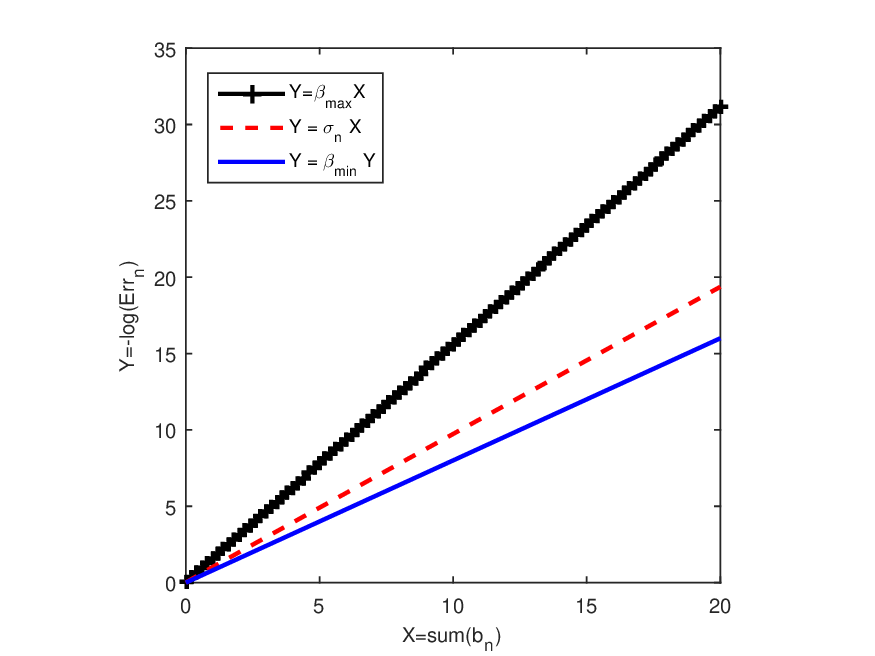}}
\subfigure[Case 2: $a_n$ and $b_n$ are chosen as in~\eqref{eq:n_b}.]{\includegraphics[width=0.49\linewidth]{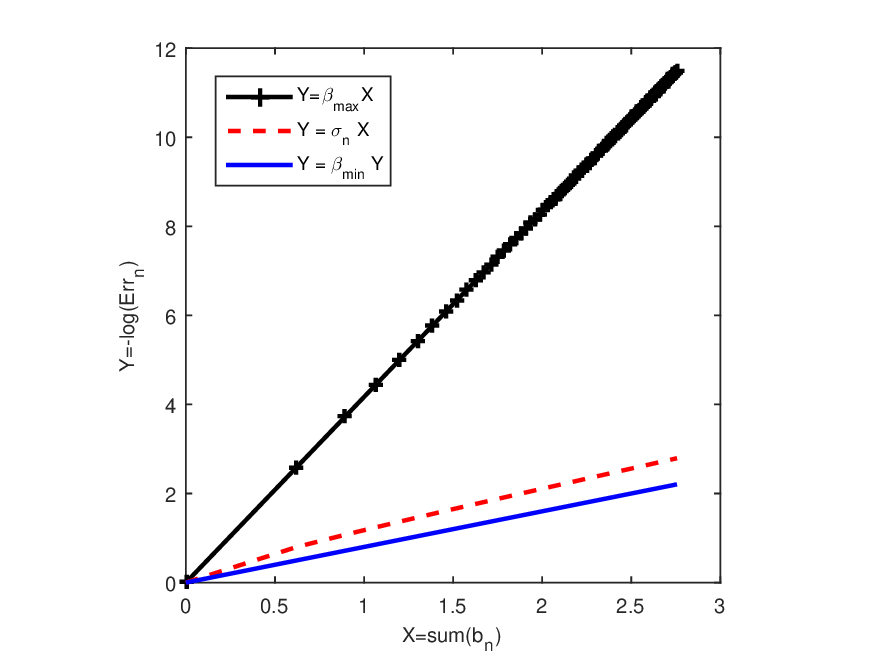}}
\caption{Estimate of convergence rates of Ishikawa iterative process~\eqref{xn-I} with $\alpha_1, \alpha_2$ as in~\eqref{alpha12}.}
\label{fig:5}
\end{figure}

\subsection{Modified Ishikawa iterative process}
\label{sec:mod_ishikawa_tests}
This section is devoted to presenting some numerical experiments performed with modified Ishikawa iteration process~\eqref{xn-IM-yn}. As what has been done with Ishikawa scheme~\eqref{xn-I}, the experiments will be shown to validate our theoretical results in Theorem~\ref{coro-IM}. 

Let us first rewrite the iterative model of~\eqref{xn-IM-yn} as follows
\begin{align}\label{test-IM}
\begin{cases} 
x_0 \in \Omega,\\
y_n = (1-a_n)x_n + a_nT_1(x_n), \\ x_{n+1} = (1-b_n)y_n + b_nT_2(y_n), \quad n \in \mathbb{N},\end{cases}
\end{align}
where $T_1, T_2$ are two non-expansive mappings defined as in~\eqref{T1T2}.

Since the overall numerical strategy is analogous to the previous section, we will only stress the key points and bring numerical tests to confirm necessary and sufficient conditions for the convergence of the modified sequence~\eqref{test-IM}. The present section is split into subsections, corresponding to subsequent steps toward the convergence results and estimation of convergence rate, respectively. 

\subsubsection{Convergence results}
Here, we verify the theoretical claims $(i)$ and $(ii)$ in Theorem~\ref{coro-IM}, where the convergence results of sequence $(x_n)$ to a common fixed point $x^*$ have been determined under assumptions of $\alpha_1, \alpha_2 \in (0,1)$ and the series $\sum{a_n}, \sum{b_n}$. Regarding this numerical example, we fix $\alpha_1$, $\alpha_2$ as in~\eqref{alpha12}, and four test cases of series $\sum{a_n}, \sum{b_n}$ as following:
\begin{itemize}
\item[(1)] Test 1: $\sum a_n = +\infty$ and $\sum b_n=+\infty$. For example, 
\begin{align*}
a_n = \frac{(n+1)^2+1}{3(n+1)^2+5}, \quad b_n = \frac{(n+1)^3+1}{4(n+1)^3-1}; \quad n \in \mathbb{N};
\end{align*}
\item[(2)] Test 2: $\sum a_n = +\infty$ and $\sum b_n<+\infty$. Let us choose 
\begin{align*}
a_n = \frac{1}{2n+5} \approx \frac{1}{n+1}, \quad b_n = \frac{\sin^4n}{(n+1)^2+5}; \quad n \in \mathbb{N};
\end{align*}
\item[(3)] Test 3: $\sum a_n < +\infty$ and $\sum b_n = +\infty$. We can choose
\begin{align*}
a_n = \frac{|\sin(n+1)|}{(n+1)^{\frac{4}{3}}+2}, \quad b_n = \frac{n+2}{3(n+1)^2+4}; \quad n \in \mathbb{N};
\end{align*}
\item[(4)] Test 4: $\sum a_n<+\infty$ and $\sum b_n <+\infty$. We consider here
\begin{align*}
a_n = \frac{1}{2(n+1)^{\frac{3}{2}}+5}, \quad b_n = \frac{1}{(n+1)^2+1}, \quad n \in \mathbb{N}.
\end{align*}
\end{itemize}

\begin{figure}[H]
\centering
\subfigure[Number of iterations $N=20$.]{\includegraphics[width=0.49\linewidth]{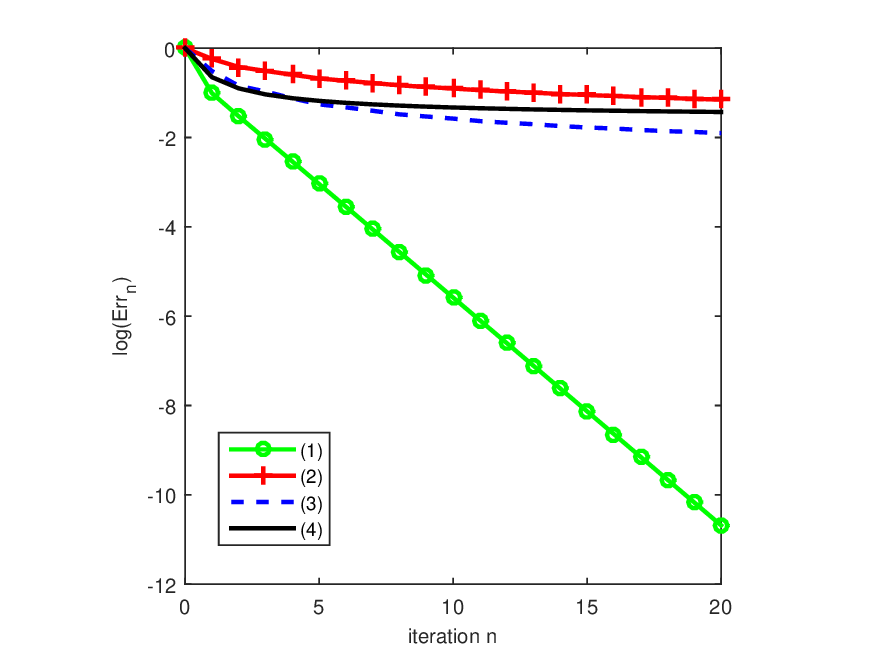}}
\subfigure[Number of iterations $N=10^5$.]{\includegraphics[width=0.49\linewidth]{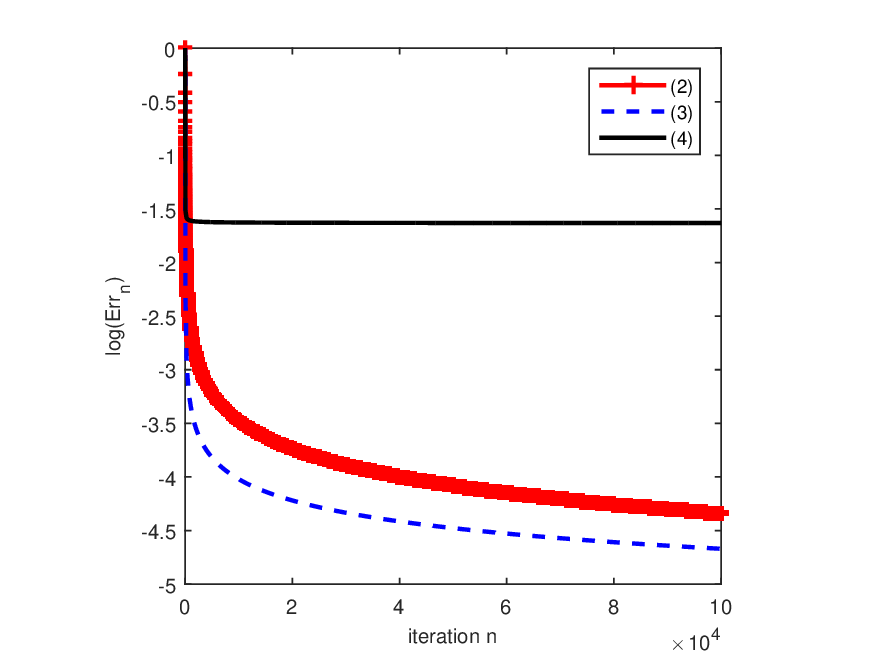}}
  \caption{Errors plotted against $n$ on a log-scale with four test models~(1)-(4).}
\label{fig:4}
\end{figure}

In Figure~\ref{fig:4}, convergence plots comparing the error as~\eqref{def:err} committed by our numerical tests on log-scale with dotted green, red (with plus signs), dashed blue, and dotted black lines indicating four cases presented above, respectively. To get a better observation, we show a logarithmic scale of error varying in the max number of iterations $N=20$ and $N=10^5$ in Figure~\ref{fig:4}(a) and~(b). It is observed that two test cases~(1)-(3) confirm our theoretical convergence analysis empirically, meanwhile, the last model fails to converge. Specifically, these results justify the observed fast convergence in the first model~(1) where $\sum (a_n + b_n)$ diverges fastest.

\subsubsection{Estimate of convergence rate}

Estimating the rate of convergence is also important in concluding the efficiency of an iterative scheme. As shown in Theorem~\ref{coro-IM}, it can be seen that the study of OEBs has a significant application in convergence rate estimation. In this section, these numerical experiments will be performed to verify that the observed convergence rates agree with our theory of OEBs. Analogously to the previous scheme, we recall the scheme~\eqref{test-IM}, under the assumptions of Theorem~\ref{coro-IM}, and also consider $\varepsilon_1$, $\varepsilon_2$ two positive constants such that
\begin{align*}
\varepsilon_1 = \min_{k \ge 0} (1-a_k-\alpha_1 a_k)>0, \quad \varepsilon_2 = \min_{k \ge 0} (1-b_k-\alpha_2 b_k)>0.
\end{align*}
The convergence rate of iterative process~\eqref{test-IM} therefore can be represented by the following ratio
\begin{align*}
\sigma_n := \frac{-\log(\mathrm{Err}_{n+1})}{\displaystyle\sum_{k=0}^n  (a_k+b_k)}, \quad n \in \mathbb{N}.
\end{align*}
Moreover, Theorem \ref{coro-IM} gives us the following estimate
\begin{align*}
\beta_{\mathrm{min}}  \le \sigma_n \le \beta_{\mathrm{max}}, \quad \mbox{ for every } n \in \mathbb{N},
\end{align*}
where $\beta_{\mathrm{min}}$ and $\beta_{\mathrm{max}}$ are defined by
\begin{align*}
\beta_{\mathrm{min}} := \min\{1-\alpha_1,1-\alpha_2\}; \quad \beta_{\mathrm{max}} := \min\left\{\frac{1+\alpha_1}{\varepsilon_1},\frac{1+\alpha_2}{\varepsilon_2}\right\}. 
\end{align*}

The strategy we follow to present convergence rates is the same as in the previous section with the Ishikawa scheme. We shall plot the error as a function of $\displaystyle\sum_{k=0}^n \big(a_k+b_k\big)$ on a log-scale. Also, to confirm the theoretical bounds of error, we respectively include here two function plots of $Y = \beta_{\mathrm{min}}X$ (dotted blue lines) and $Y = \beta_{\mathrm{max}} X$ (dashed black lines). Figure~\ref{fig:6} shows us the estimate of convergence rates when choosing $a_n$, $b_n$ as in~\eqref{eq:n_a} and
\begin{align}\label{eq:n_a2}
a_n = \frac{\sqrt{2(n+1)^2+1}}{4(n+1)^2+3}, \quad  b_n = \frac{1}{\sqrt{2n+3}}, \quad n\in \mathbb{N},.
\end{align}
One can observe that the convergence behavior is expected from our theoretical findings and the rate (dashed red line) can be estimated by comparing with lines of slope $\beta_{\mathrm{min}}$ (dotted blue) and $\beta_{\mathrm{max}}$ (dotted black). 

\begin{figure}[H]
\centering
\subfigure[$a_n$, $b_n$ are chosen as in~\eqref{eq:n_a}.]{\includegraphics[width=0.49\linewidth]{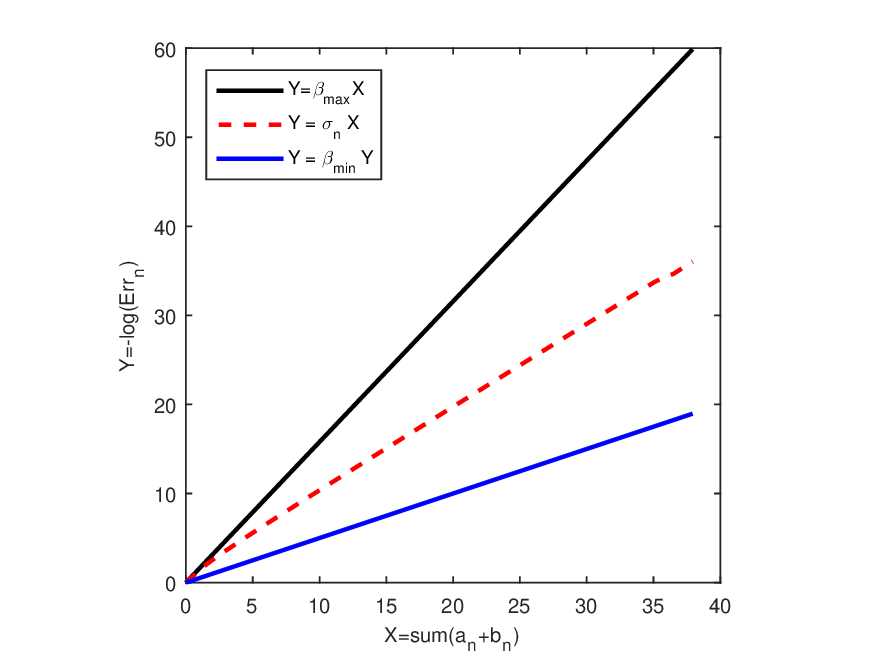}}
\subfigure[$a_n$, $b_n$ are chosen as in~\eqref{eq:n_a2}.]{\includegraphics[width=0.49\linewidth]{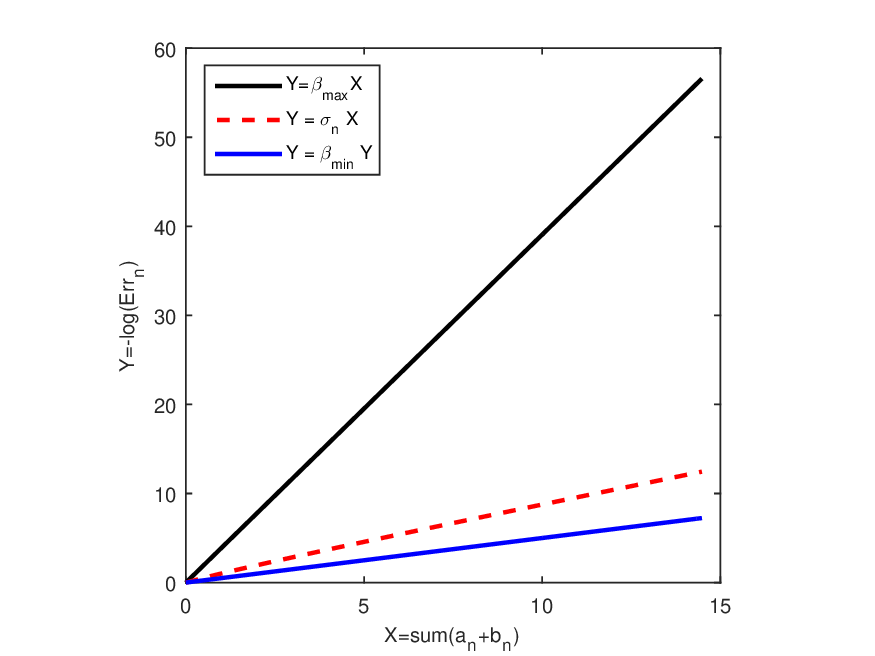}}
\caption{Estimate of convergence rate of modified Ishikawa iterative process~\eqref{xn-IM-yn} with $\alpha_1$, $\alpha_2$ in~\eqref{alpha12}.}
\label{fig:6}
\end{figure}

Having arrived at this stage, a question about the influence of OEBs in iterative schemes may arise: How to compare the iterative methods with the reasonable role of OEBs via convergence rate estimate? As we shall see in the next section, a variety of numerical experiments will be conducted to validate our theoretical findings in Theorem~\ref{theo:I-IM}.

\subsection{Comparison of convergence speeds}
In this section, we focus on some comparison experiments between two considered iterative schemes~\eqref{xn-I} and~\eqref{xn-IM-yn}. Here, we perform some numerical examples to verify the theory proved in Theorem~\ref{theo:I-IM}. To do that, let us consider sequences $(x_n^{\mathrm{I}})$ and $(x_n^{\mathrm{IM}})$ presented by two processes~\eqref{xn-I} and~\eqref{xn-IM-yn} to approximate a common fixed point $x^*$ of two non-expansive mappings $T_1$ and $T_2$ given in~\eqref{T1T2}. Further, we fix two values $\alpha_1$ and $\alpha_2$ as in~\eqref{alpha12}. In the sequel, to illustrate the numerical results, we shall also present the executive errors on a logarithmic scale against the number of iterations $n$.

In the first test, error plots for two simulations~\eqref{xn-I} and~\eqref{xn-IM-yn} are shown when $(a_n)$, $(b_n)$ not satisfying~\eqref{cond:I-IM-a} as following
\begin{itemize}
\item[(1)] Test 1: Consider $a_n \approx \frac{1}{n+1}$ and $b_n \approx \frac{1}{5}$, in particular one may choose
\begin{align}\notag %\label{ab7-b}
a_n = \frac{2n+1}{(2n+3)^2}, \quad b_n = \frac{n+2}{5n+9},\quad n \in \mathbb{N};
\end{align}
\item[(2)] Test 2: Consider $a_n \approx \frac{1}{(n+1)^2}$ and $b_n \approx \frac{1}{\sqrt{n+1}}$, we specifically choose
\begin{align}\notag %\label{ab7-c}
a_n = \frac{\sqrt{2(n+1)^2+1}}{4(n+1)^3+5}, \quad b_n = \frac{|\sin (n+1)|}{\sqrt{6n+5}}, \quad n \in \mathbb{N}.
\end{align}
\end{itemize}

In Figure~\ref{fig:8a}(a) and (b), we establish comparison results of convergence rates between two schemes: Ishikawa~\eqref{xn-I} (dotted red lines) and the modified one~\eqref{xn-IM-yn} (dashed black lines). It can be seen that the error of~\eqref{xn-IM-yn} always has better behavior than~\eqref{xn-I} whenever the convergence result holds. This coincides with the statement in \eqref{IMfasterI}. However, it is not sufficient to conclude that \eqref{xn-IM-yn} converges faster than~\eqref{xn-I} in the sense of~\eqref{faster} in Definition~\ref{def-conv-rate}. The reason is that $(a_n)$, $(b_n)$ do not satisfy~\eqref{cond:I-IM-a}. Therefore, the ratio $\mathcal{R}(x^{\mathrm{IM}}_n,x^{\mathrm{I}}_n,x^*)$ may not converge to zero. To clarify this fact, we plot the following term
$$\log\mathcal{R}\big(x_n^{\mathrm{IM}},x_n^{\mathrm{I}},x^*\big) = \log \left( \frac{\mathrm{Err}_n^{\mathrm{IM}}}{\mathrm{Err}_n^{\mathrm{I}}}\right)$$ 
between two iterative sequences with respect to the iteration $n$, where $\mathrm{Err}_n^{\mathrm{I}}$ and $\mathrm{Err}_n^{\mathrm{IM}}$ denote the executive errors of two iterative processes~\eqref{xn-I} and~\eqref{xn-IM-yn}, respectively. Figure~\ref{fig:8a}(c) and (d) show that $\mathcal{R}\big(x_n^{\mathrm{IM}},x_n^{\mathrm{I}},x^*\big)$ converges to a positive constant.

\begin{figure}[H]
\centering
\subfigure[Comparison of numerical convergence rates in Test 1.]{\includegraphics[width=0.49\linewidth]{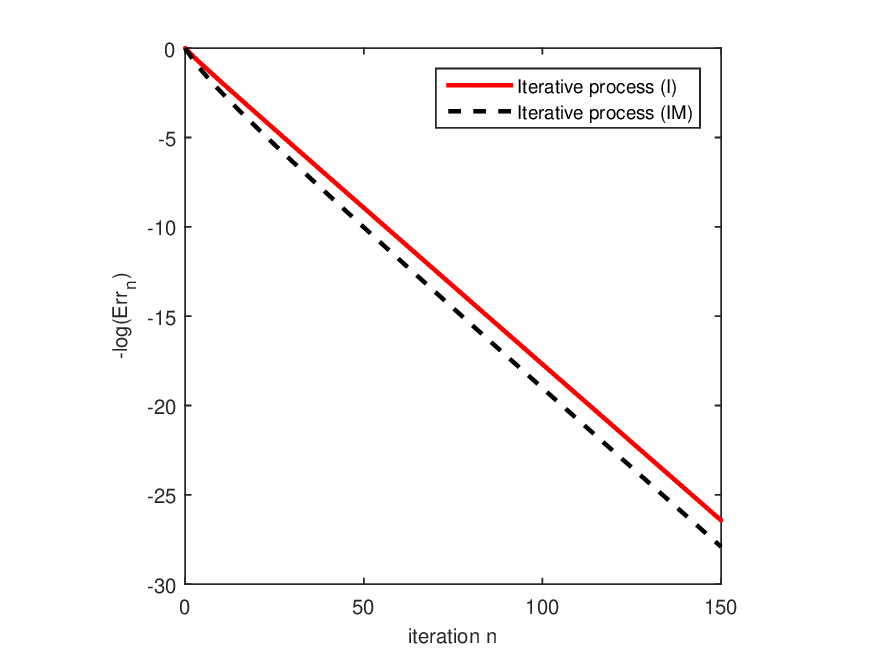}}
\subfigure[Comparison of numerical convergence rates in Test 2.]{\includegraphics[width=0.49\linewidth]{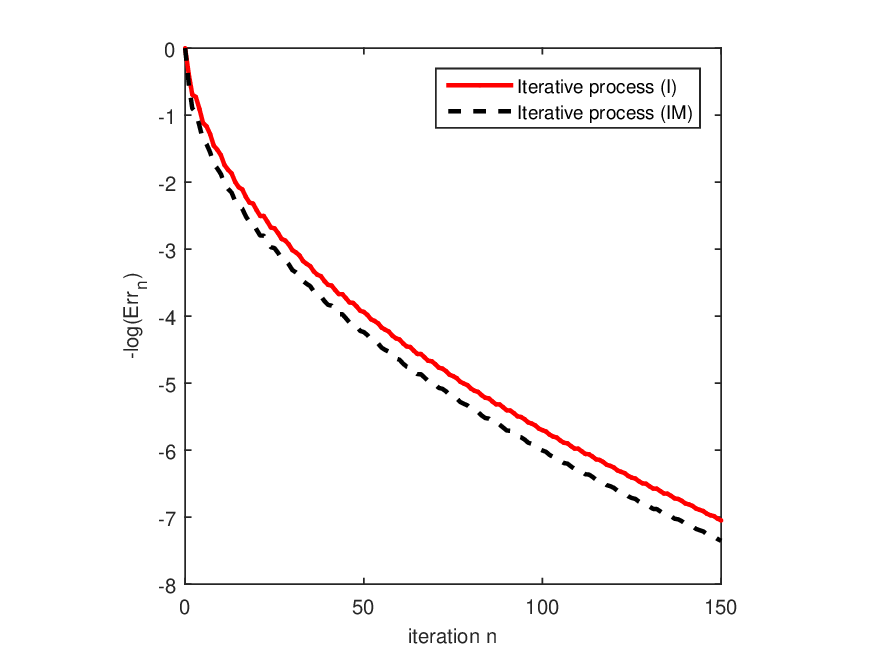}}
\subfigure[$\log\mathcal{R}\big(x_n^{\mathrm{IM}},x_n^{\mathrm{I}},x^*\big)$ for Test 1.]{\includegraphics[width=0.49\linewidth]{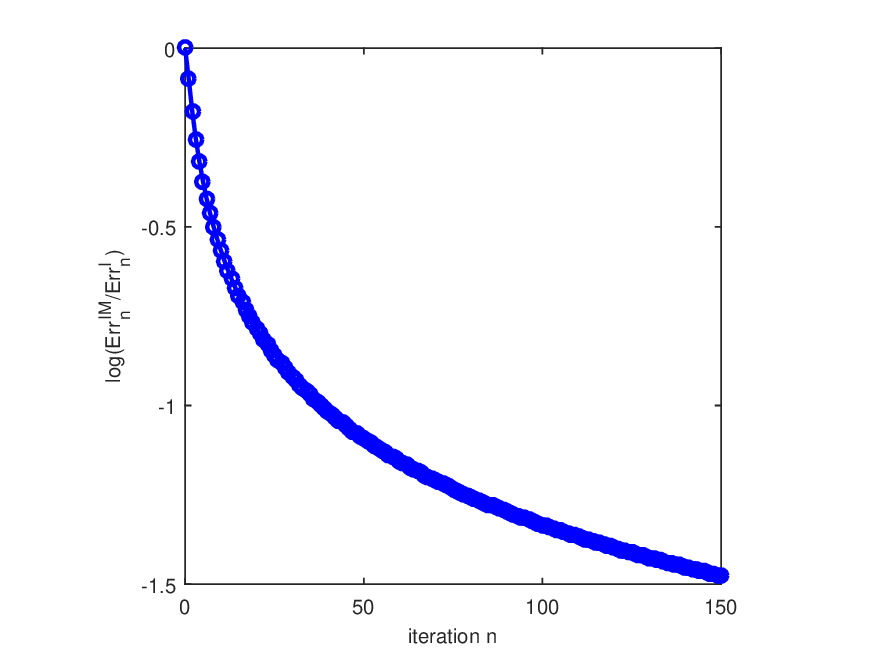}}
\subfigure[$\log\mathcal{R}\big(x_n^{\mathrm{IM}},x_n^{\mathrm{I}},x^*\big)$ for Test 2.]{\includegraphics[width=0.49\linewidth]{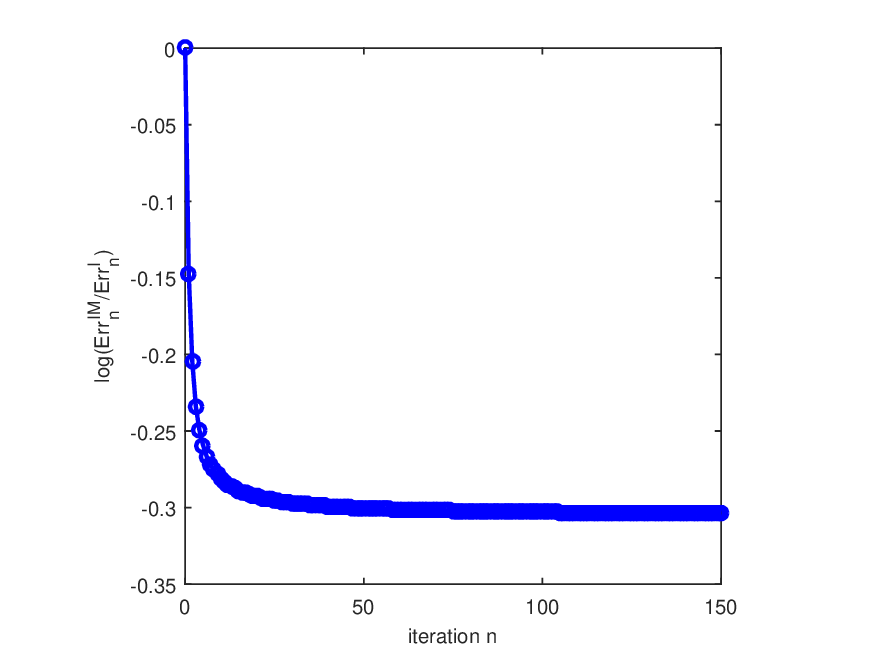}}
\caption{Comparison of convergence rates between two iterative schemes~\eqref{xn-I} and~\eqref{xn-IM-yn} without assumption~\eqref{cond:I-IM-a}.}
\label{fig:8a}
\end{figure}

In the next numerical test, we shall choose $(a_n)$, $(b_n)$ satisfy the assumption~\eqref{cond:I-IM-a}, this is equivalent to
\begin{align}\label{cond-antest}
a_n \ge \frac{(1+\alpha_2)b_n}{(1-\alpha_1)(1+\alpha_2b_n)}, \quad \mbox{ for all } n  \in \mathbb{N}.
\end{align}
At this stage, thanks to the condition $a_n \le 1$, we infer that
\begin{align*}
b_n \le \frac{1-\alpha_1}{1+\alpha_1\alpha_2}, \quad \mbox{ for all } n  \in \mathbb{N}.
\end{align*}
For this reason, to ensure the validation of condition \eqref{cond-antest}, we first take $(b_n)$ such that 
$$\sum b_n = \infty, \ \ \text{and} \ \ b_n \le \frac{1-\alpha_1}{1+\alpha_1\alpha_2}. \quad n  \in \mathbb{N},$$ 
Then, it remains to choose $(a_n)$ accordingly to
\begin{align}\label{an-in-bn}
a_n = \min\left\{1; \ \mathrm{rand}([0,1]) + \frac{(1+\alpha_2)b_n}{(1-\alpha_1)(1+\alpha_2b_n)}\right\}, \quad \mbox{ for all } n \in \mathbb{N}.
\end{align}

To highlight the numerical experiments, we perform two following test models
\begin{itemize}
\item[(3)] Test 3: $\displaystyle b_n = \frac{n+2}{5n+9},\quad n \in \mathbb{N};$
\item[(4)] Test 4: $\displaystyle b_n = \frac{|\sin (n+1)|}{\sqrt{6n+5}}, \quad n \in \mathbb{N},$
\end{itemize}
along with two randomly sequences $(a_n)$ determined in \eqref{an-in-bn}, respectively. Analogously to Figure~
\ref{fig:8a}, Figure~\ref{fig:compare} shows the comparison of convergence rates between two iterative schemes fulfill assumption~\eqref{cond:I-IM-a}. In Figure~\ref{fig:compare}(a) and (b), we only present the rate at which the iterative process converges more slowly (regarding the Ishikawa iteration~\eqref{xn-I}). In Figure~\ref{fig:compare}(c) and (d), we plot the logarithm of ratio $\mathcal{R}\big(x_n^{\mathrm{IM}},x_n^{\mathrm{I}},x^*\big)$ against the iteration $n$ to derive an observation of comparison strategy.  
 
\begin{figure}[H]
\centering
\subfigure[Numerical convergence rate of \eqref{xn-I} in Test 3.]{\includegraphics[width=0.49\linewidth]{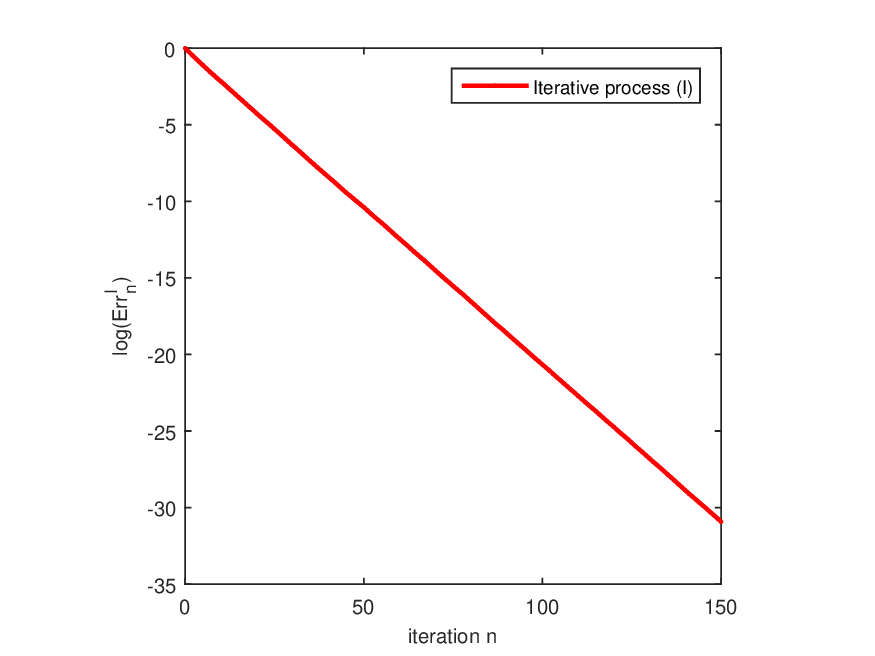}}
\subfigure[Numerical convergence rate of \eqref{xn-I} in Test 4.]{\includegraphics[width=0.49\linewidth]{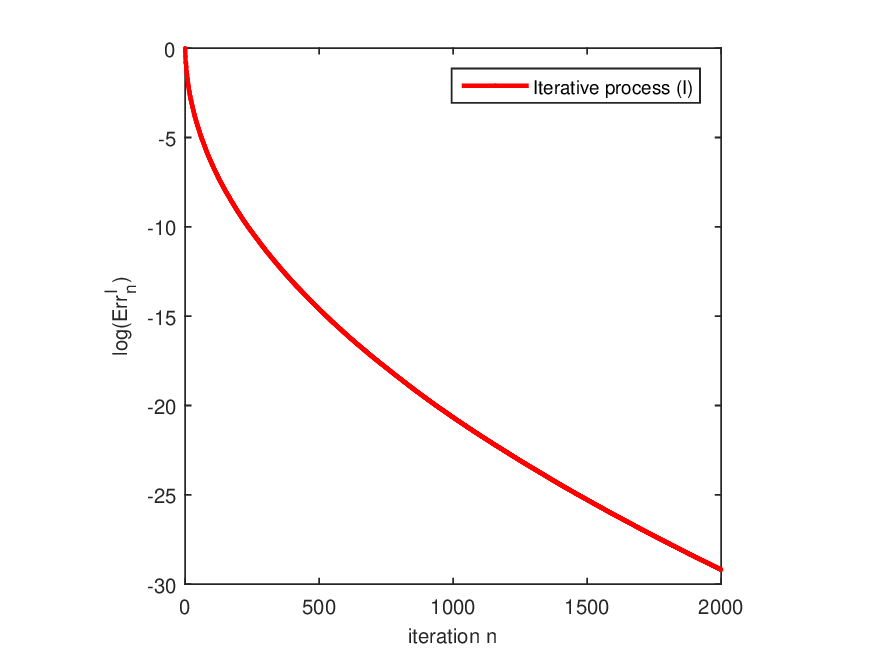}}
\subfigure[$\log\mathcal{R}\big(x_n^{\mathrm{IM}},x_n^{\mathrm{I}},x^*\big)$ for Test 3.]{\includegraphics[width=0.49\linewidth]{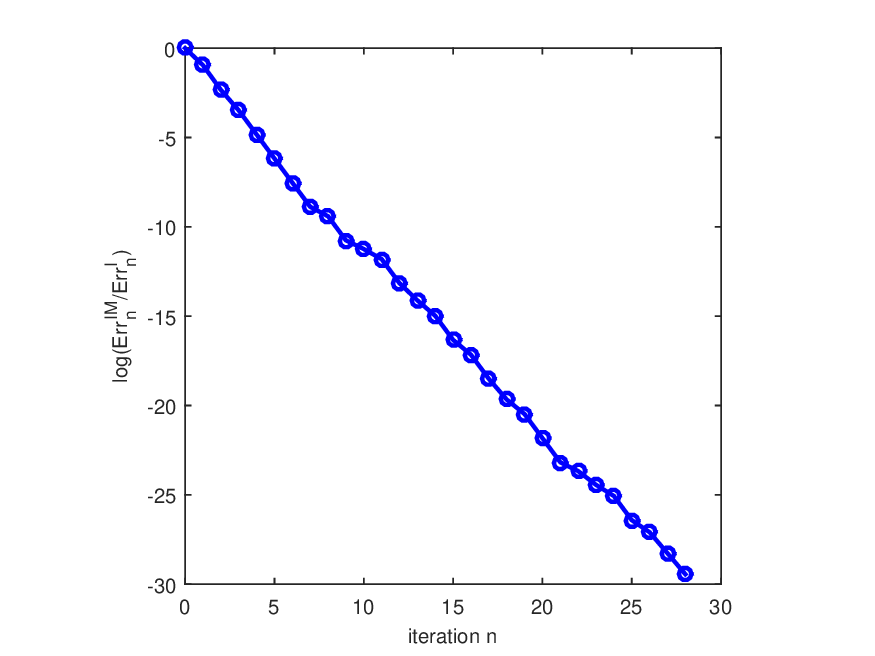}}
\subfigure[$\log\mathcal{R}\big(x_n^{\mathrm{IM}},x_n^{\mathrm{I}},x^*\big)$ for Test 4.]{\includegraphics[width=0.49\linewidth]{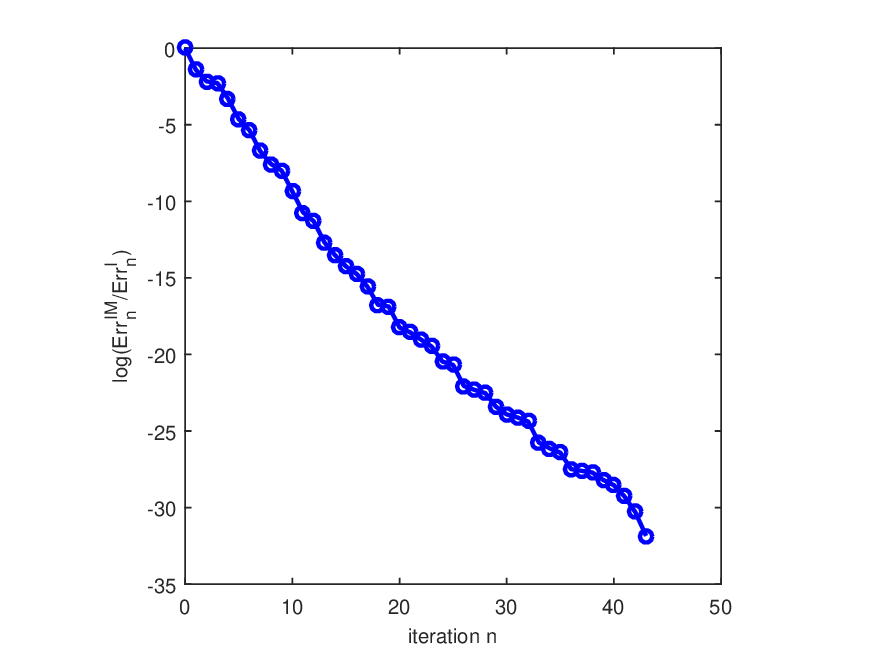}}
\caption{Comparison of convergence rates between two iterative schemes~\eqref{xn-I} and~\eqref{xn-IM-yn} under assumption~\eqref{cond:I-IM-a}.}
\label{fig:compare}
\end{figure}
From numerical results, it can be easily seen that
\begin{align*}
\mathcal{R}\big(x_n^{\mathrm{IM}},x_n^{\mathrm{I}},x^*\big) \to 0 \mbox{ as } n \to \infty,
\end{align*}
which in turn will allow us to conclude the modified sequence~\eqref{xn-IM-yn} converges more rapidly than the sequence in~\eqref{xn-I} in the sense of Definition~\ref{def-conv-rate}. The results confirm our theoretical analysis for convergence rate.

%In Figure~\ref{fig:compare}, we establish comparison results of convergence rates between two schemes: Ishikawa~\eqref{xn-I} (dashed blue) and the modified one~\eqref{xn-IM-yn} (dotted red). It can be seen that the choice of $a_n$ and $b_n$ satisfying~\eqref{cond:I-IM-a} ensures the convergence of our models, and the rate at which the iterative scheme converges depends greatly on the assumptions of $\sum{b_n}$, as proved in Theorem~\ref{coro-IM}. On the other hand, as becomes clear in Figure~\ref{fig:compare},

\section{Conclusion}
\label{sec:conclude}
\subsection*{Concluding remarks}
This paper constitutes an effort toward the convergence study of iterative schemes from a theoretical point of view. Here, we explore a new approach to convergence analysis based on the so-called \emph{optimal error bounds} (OEBs). As far as we know, this type of error bound has been less investigated in establishing the rate of convergence and convergence analysis of iterative methods. In the study, we state and prove some theoretical results regarding OEBs, that are useful for deriving necessary and sufficient conditions for the convergence and estimating the rate of convergence of an iterative sequence. To support our theoretical findings, we also carry out some numerical results in this work. Via a few numerical examples, it indicates that illustrations confirm our theoretical results in Theorem~\ref{coro-I}, ~\ref{coro-IM}. Moreover, in convergence rate comparison between different algorithms, the study of OBEs in Theorem~\ref{theo:I-IM} brings us many advantages. We also include some numerical tests as well as their interpretation in this paper to establish our claim in the main theorems. 
\subsection*{Significance and novelty of the study}
In this paper, a new theory is developed from the convergence rate point of view, where the study of iterative methods for finding a common fixed point of two non-expansive mappings can be presented as an example. We believe that it would be very interesting to exhibit and apply the study of OBEs to a large class of iterative models. Moreover, in numerical analysis, it is also interesting when studying (sharp) OEBs to investigate necessary and/or sufficient conditions under which we have the convergence. Our approach opens a new perspective on the study of the convergence rate estimate of fixed-point iterative methods and also illustrates the comparison strategies between methods. It can be more broadly applicable to developing convergence conditions for an abstract iterative scheme in various general models. 
\subsection*{Further research}
The approach with OEBs (both OUEB and OLEB) can be applied to a larger class of generalized operators to approximate the set of common fixed points. Further, it is also of theoretical and practical importance to conduct iterative schemes in the setting of metric spaces (endowed with graph structures), which will be detailed in our forthcoming paper. 
Not only deal with the Ishikawa and modified Ishikawa iteration processes but the idea of OEBs can also be extended to address the necessary/sufficient conditions for convergence and the convergence rate estimate of several generalized multi-step iterative algorithms.

%Conclusion. OEBs are exploited to prove convergence of the fixed point iteration
%Our emphasis is to address the sufficient condition for convergence and the error estimate. 

\section*{Conflict of Interest}
The authors declared that they have no conflict of interest.

%\section*{Declarations}
%Ethics approval and consent to participate: yes.\\
%
%\noindent
%Consent for publication: yes.\\
%
%\noindent
%Availability of data and materials: The authors confirm that the data supporting the findings of this study are available within the article and its supplementary materials.\\
%
%%\noindent
%%Authors' contributions: T.-N. Nguyen and M.-P. Tran wrote the main parts of this paper. The other authors wrote the raw version of the content and very first proofs of Section 3 and Section 4.\\

%\noindent
%Authors' information (optional): 
%
%Thanh-Nhan Nguyen, Group of Analysis and Applied Mathematics, Department of Mathematics, Ho Chi Minh City University of Education, Ho Chi Minh City, Vietnam. Email: nhannt@hcmue.edu.vn.
%
%Minh-Phuong Tran, Applied Analysis Research Group, Faculty of Mathematics and Statistics, Ton Duc Thang University, Ho Chi Minh City, Vietnam. Email: tranminhphuong@tdtu.edu.vn;
%
%Hong-Nhung Nguyen, Department of Mathematics, Ho Chi Minh City University of Education, Ho Chi Minh City, Vietnam. Email: nhungnh2102@gmail.com.

\end{document}